\newcommand{\map}[3]{#1: #2 \rightarrow #3}
\newcommand{\blkdiag}{\operatorname{diag}}
\newcommand{\EE}{\mathcal{E}} \newcommand{\FFF}{\mathcal{F}}
\newcommand{\GG}{\mathcal{G}}
\newcommand{\OO}{\mathcal{O}}
\DeclareMathOperator{\rank}{rank}
\newcommand{\subj}{\textnormal{subj. to}}
\newcommand{\nbrs}{\mathcal{N}}
\newcommand{\subgrad}{\widetilde{\nabla}}
\newcommand{\real}{{\mathbb{R}}}
\renewcommand{\natural}{{\mathbb{N}}}
\newcommand{\expv}{{\mathbb{E}}}
\newcommand{\until}[1]{\{1,\ldots,#1\}}
\newcommand\oprocendsymbol{\hbox{$\square$}}
\newcommand\oprocend{\relax\ifmmode\else\unskip\hfill\fi\oprocendsymbol}
\def\eqoprocend{\tag*{$\square$}}
\newtheorem{theorem}{Theorem}[section]
\newtheorem{proposition}[theorem]{Proposition}
\newtheorem{corollary}[theorem]{Corollary}
 \newtheorem{lemma}[theorem]{Lemma}
\newtheorem{remark}[theorem]{Remark}
\newtheorem{assumption}[theorem]{Assumption}
\newcommand{\bx}{\mathbf{x}}
\newcommand{\br}{\mathbf{r}}
\newcommand{\inc}{\Gamma}
\newcommand{\bz}{\mathbf{z}}
\newcommand{\bv}{\mathbf{v}}
\newcommand{\by}{\mathbf{y}}
\newcommand{\bg}{\mathbf{g}}
\newcommand{\0}{\mathbf{0}}
\newcommand{\1}{\mathbf{1}}
\newcommand{\bmu}{\boldsymbol{\mu}}
\newcommand{\blambda}{\boldsymbol{\lambda}}
\newcommand{\kron}{\otimes}
\newcommand{\tp}{\tilde{p}}
\newcommand{\prob}{\sigma}
\newcommand{\rvedge}{\nu}
\newcommand{\cvmat}{\Pi}
\newcommand{\best}{\mathrm{best}}
\let\Im\relax
\DeclareMathOperator{\Im}{Im}
\DeclareMathOperator{\Ker}{Ker}
\newcommand{\StatexIndent}[1][3]{%
  \setlength\@tempdima{\algorithmicindent}%
  \Statex\hskip\dimexpr#1\@tempdima\relax}
\renewcommand{\inf}{\operatornamewithlimits{inf\vphantom{p}}}
\renewcommand{\lim}{\operatornamewithlimits{lim\vphantom{p}}}
\newcommand{\GN}[1]{{ #1}}
\def \algname/{Distributed Primal Decomposition for Time-Varying graphs}
\def \algacronym/{DPD-TV}
\title{\Large \bf
Distributed Constraint-Coupled Optimization via \\ Primal Decomposition over Random Time-Varying Graphs
}
\author{Andrea Camisa,
  Francesco Farina,
  Ivano Notarnicola,
  Giuseppe Notarstefano
  \thanks{The authors are with the Department of Electrical, 
  Electronic and Information Engineering, Alma Mater Studiorum -- Universit\`{a} di Bologna, Bologna, Italy,
  \texttt{\{a.camisa, franc.farina, ivano.notarnicola, giuseppe.notarstefano\}@unibo.it}.
  This result is part of a project that has received funding from the European Research
  Council (ERC) under the European Union's Horizon 2020 research and innovation
  programme (grant agreement No 638992 - OPT4SMART).
  }
  \thanks{A preliminary version of this work has appeared in
  the Proceedings of the 58th Conference on Decision and Control~\cite{camisa2019primal}.
  The present manuscript provides all the theoretical proofs under a more general
  communication model, with nonuniform edge probabilities.
  Moreover, convergence rates are established and a discussion about the algorithm
  tuning is provided.}
}
\begin{document}

\maketitle

\begin{abstract}
  The paper addresses large-scale, convex optimization problems that need to be
  solved in a distributed way by agents 
  communicating according to a random
  time-varying graph.
  Specifically, the goal of the network
  is to minimize the sum of local costs,
  while satisfying local and coupling constraints.
  Agents communicate according to a time-varying model in which edges of an
  underlying connected graph are active at each iteration with certain
  non-uniform probabilities.
  By relying on a primal decomposition scheme applied to an equivalent problem
  reformulation, we propose a novel distributed algorithm in which agents
  negotiate a local allocation of the total resource only with neighbors with
  active communication links.
  The algorithm is studied as a subgradient method with block-wise updates, in
  which blocks correspond to the graph edges that are active at each iteration.
  Thanks to this analysis approach, we show almost sure convergence to the
  optimal cost of the original problem and almost sure asymptotic primal
  recovery without resorting to averaging mechanisms typically employed in dual
  decomposition schemes.
  Explicit sublinear convergence rates are provided under the assumption of
  diminishing and constant step-sizes.
  Finally, an extensive numerical study on a plug-in electric vehicle charging
  problem corroborates the theoretical results.
\end{abstract}

\section{Introduction}
\label{sec:intro}

Large-scale systems consisting of several independent control systems can be
found in numerous contexts ranging from smart grids to autonomous vehicles and
cooperative robotics.
In order to perform cooperative control tasks, such systems (or agents)
must employ their computation capabilities and collaborate with each other
by means of neighboring communication, without resorting to a centralized
computing unit.
These cooperative tasks can be often formulated as distributed optimization
problems consisting of a large number of decision variables,
each one associated to an agent in the network and satisfying private constraints.
Furthermore, a challenging feature of such optimization problems is that
all the decision variables are intertwined by means of a global coupling constraint,
that can be used to model, e.g., formation maintenance requirements or
a total budget that must not be exceeded.
This set-up is referred to as \emph{constraint coupled} optimization.

The majority of the literature on distributed optimization has focused on a
framework in which, differently from the constraint-coupled set-up, cost functions and
constraints depend on the same, common decision variable, and agents
aim for \emph{consensual} optimal solutions.
An exemplary, non-exhaustive list of works for this optimization set-up is
\cite{nedic2009distributed,duchi2012dual,zhu2012distributed,
mota2013dadmm,shi2014linear,jakovetic2014fast,shi2015extra}.
Only recently has the constraint-coupled set-up gathered more attention
from our community, due to its applicability in control.
In~\cite{simonetto2016primal} consensus-based dual decomposition is combined
with a primal recovery mechanism, whereas~\cite{falsone2017dual} considers a
distributed dual algorithm based on proximal minimization.
In~\cite{notarnicola2017constraint} a distributed algorithm based on successive
duality steps is proposed. Differently from~\cite{simonetto2016primal,falsone2017dual},
which employ running averages for primal recovery, \cite{notarnicola2017constraint}
can guarantee feasibility of primal iterates without averaging schemes.
In~\cite{chang2014distributed} a
consensus-based primal-dual perturbation algorithm is proposed to solve smooth
constraint-coupled optimization problems. A distributed saddle-point
algorithm with Laplacian averaging is proposed in~\cite{mateos2017distributed}
for a class of min-max problems. 
In~\cite{burger2014polyhedral}, a distributed algorithm based on cutting planes is formulated.
Recently, in~\cite{liang2019distributed} a primal-dual algorithm
with constant step-size is proposed under smoothness assumption of both 
costs and constraints.
The works in~\cite{necoara2015linear, alghunaim2018dual, sherson2019distributed}
consider a similar set-up, but the proposed algorithms strongly rely on the sparsity 
pattern of the coupling constraints.
Linear constraint-coupled problem set-ups have been also tackled by means of distributed 
algorithms based on the Alternating Direction Method of Multipliers (ADMM). In~\cite{chang2014multi}
the so-called consensus-ADMM is applied to the dual problem formulation, which is
then tailored for an application in Model Predictive Control by~\cite{wang2017distributed}.
In~\cite{carli2019distributed} an ADMM-based algorithm is proposed and
analyzed using an operator theory approach while in~\cite{zhang2018consensus}
an augmented Lagrangian approach equipped with a tracking mechanism is proposed.
However, the last two approaches require agents to perform multiple communication rounds 
to converge in a neighborhood of an optimal solution. In~\cite{falsone2019tracking}
ADMM is combined with a tracking mechanism to design a distributed algorithm
with exact convergence to an optimal solution.

\GN{
The analysis of our algorithm for random time-varying graphs} builds on
randomized block subgradient methods, \GN{therefore let us} recall some related
works from the centralized literature.
A survey on block coordinate methods is given in~\cite{beck2013convergence},
while a unified framework for nonsmooth problems can be found~\cite{razaviyayn2013unified}.
In~\cite{richtarik2014iteration}, a randomized block coordinate descent method
is formulated, whereas~\cite{dang2015stochastic} investigates a stochastic block mirror
descent approach with random block updates.
\GN{In~\cite{necoara2013random}, a distributed algorithm for a linearly constrained
problem is analyzed with coordinate descent methods.
This technique is also used in~\cite{necoara2014random}, which considers a constraint-coupled
problem. However, the approach used in~\cite{necoara2013random,necoara2014random}
only allow for a single pair of agents updating at a time and requires smooth
cost functions.}

In this paper, we propose a distributed algorithm to solve \GN{nonsmooth}
constraint-coupled optimization problems over random, time-varying communication networks. 
We consider a communication model in which edges of an underlying, connected graph 
have a certain probability of being active at each time step.
The proposed algorithm consists in a two-step procedure in which agents first solve
a local optimization problem and then update a vector representing the local
allocation of total resource. 
\GN{
The algorithmic structure is inspired to the algorithm for fixed graphs
in~\cite{notarnicola2017constraint}. However, the line of analysis
proposed in~\cite{notarnicola2017constraint} hampers extension to
time-varying graphs. Therefore, in this paper, we develop a new
theoretical analysis to deal with the significant challenges arising in
the time-varying context. In particular,} this method is interpreted as a primal decomposition scheme
applied to an equivalent, relaxed version of the target constraint-coupled
problem.
For this scheme, we prove that almost surely the objective value converges to
the optimal cost, and any limit point of the local solution estimates is an
optimal (feasible) solution.
Moreover, we prove a sublinear convergence rate of the objective value under the
assumption of constant or diminishing step-size.  As for constant step-size,
convergence to a neighborhood of the solution is attained with a rate
$O(1/\sqrt{t})$, while for a diminishing step-size of the type $1/t$,
exact convergence is attained with rate $O(1/\log(t))$.
To show these results, we employ a graph-induced change of variables to derive
an equivalent, unconstrained problem formulation. This allows us to recast the
distributed algorithm as a randomized block subgradient method in which blocks
correspond to edges in the graph.
As a side result, we also provide an almost sure convergence result for a block
subgradient method in which (multiple) blocks are drawn according to non-uniform
probabilities. This generalized block subgradient method results into updates in
which different combinations of multiple blocks can be chosen. To the best of
our knowledge, these nontrivial challenges have not been addressed so far in the
\GN{block subgradient} literature.
\GN{A thorough comparison of the contributions provided in this paper
with existing work will be performed in light of the analysis
provided in Section~\ref{sec:analysis}.}

The paper is organized as follows. In Section~\ref{sec:set-up_and_algorithm}, we
introduce the distributed optimization set-up and we describe the proposed
distributed algorithm. In Section~\ref{sec:block_subgrad_method}, we provide
intermediate results on a (centralized) block subgradient method, which are then
used in Section~\ref{sec:analysis} for the analysis of the distributed
algorithm. Convergence rates and a discussion on algorithm tuning are
enclosed in Section~\ref{sec:rates_and_discussion}. Finally, in
Section~\ref{sec:simulations}, an extensive numerical study on a control
application is presented.

\paragraph*{Notation} %
The symbols $\0$ and $\1$ denote the vector of zeros
and ones respectively. %
The $n \times n$ identity matrix is denoted by $I_n$. Where
the size of the matrix is clear from the context, we drop the subscript $n$.
Given a vector $\bx \in \real^n$ and a positive definite matrix
$W \in \real^{n \times n}$, we denote by $\|\bx\|_W = \sqrt{\bx^\top W \bx}$
the norm of $\bx$ weighted by $W$, which we also term $W$-norm.
Given two vectors $\bx,\by\in\real^n$ we write $\bx \le \by$ (and
consistently for other sides) to denote component-wise inequalities.
The symbol $\kron$ denotes the Kronecker product.
Given a convex function $f(\bx) : \real^n \rightarrow \real$
and a vector $\bar{\bx} \in \real^n$, we denote by
$\subgrad f(\bar{\bx})$ a subgradient of $f$ at $\bar{\bx}$.
Given a vector $\bz$ arranged in $m$ blocks, its $\ell$-th block
(or portion) is denoted by $\bz_\ell$ or, interchangeably, by $[\bz]_\ell$,
and the complete vector is written $\bz = (\bz_1, \ldots, \bz_m)$.

\section{Optimization Set-up and Distributed Algorithm}
\label{sec:set-up_and_algorithm}

In this section, we formalize the investigated problem and
network set-up. Then, we present the proposed
distributed algorithm together with its convergence result.
Finally, we recall some preliminaries for the subsequent analysis.

\subsection{Distributed Constraint-Coupled Optimization}
\label{sec:set-up}

We deal with a network of $N$ agents that must solve
a \emph{constraint-coupled} optimization problem, which can be
stated as follows
\begin{align}
\label{eq:problem_original}
\begin{split}
  \min_{\bx_1, \ldots, \bx_N} \: & \: \sum_{i=1}^N f_i(\bx_i)
  \\
  \subj \: & \: \sum_{i=1}^N \bg_i(\bx_i) \le \0,
  \\
  & \: \bx_i \in X_i, \hspace{1cm} i \in \until{N},
\end{split}
\end{align}
where $\bx_1,\ldots,\bx_N$ are the decision variables with each $\bx_i \in \real^{n_i}$, $n_i \in \natural$.
Moreover, for all $i \in \until{N}$, $\map{f_i}{\real^{n_i}}{\real}$
depends only on $\bx_i$, $X_i \subset \real^{n_i}$ is the
constraint set associated to $\bx_i$ and
$\map{\bg_i}{\real^{n_i}}{\real^S}$ is the $i$-th contribution to the
(vector-valued) \emph{coupling} constraint $\sum_{i=1}^N \bg_i(\bx_i) \le \0$.

In the considered distributed computation framework, the problem data
are assumed to be scattered throughout the network. Agents have only a
partial knowledge of the entire problem and must cooperate with each other
in order to find a solution.
Each agent $i$ is assumed to know only its local constraint $X_i$,
its local cost $f_i$ and its own contribution $\bg_i$ to the
coupling constraints, and is only interested in computing its own
portion $\bx_i^\star$ of an optimal solution
$(\bx_1^\star, \ldots, \bx_N^\star)$ of problem~\eqref{eq:problem_original}.

The following two assumptions guarantee that
\emph{(i)} the optimal cost of problem~\eqref{eq:problem_original} is finite
and at least one optimal solution exists,
\emph{(ii)} duality arguments are applicable.
\begin{assumption}
\label{ass:problem}
  For all $i \in \until{N}$, the set $X_i$ is non-empty, convex and compact, the
  function $f_i$ is convex and each component of $\bg_i$ is a convex function.
  \oprocend
\end{assumption}
\begin{assumption}[Slater's constraint qualification]
\label{ass:slater}
  There exist $\bar{\bx}_1 \in X_1, \ldots, \bar{\bx}_N \in X_N$ such that
  $\sum_{i=1}^N \bg_i(\bar{\bx}_i) < \0$.
  \oprocend
\end{assumption}

\subsection{Random Time-Varying Communication Model}
\label{sec:communication model}

Agents are assumed to communicate according to a time-varying
communication graph, obtained as subset of an \emph{underlying} graph
$\GG_u = (\until{N}, \EE_u)$, assumed to be undirected and connected, where
$\EE_u \subseteq \until{N} \times \until{N}$ is the set of edges.
An edge $(i,j)$ belongs to $\EE_u$ if and only if agents $i$ and $j$ can
transmit information to each other, in which case also $(j,i) \in \EE_u$.
In many applications, the communication links are not always active
(due, e.g., to temporary unavailability). This is taken into account
by considering that each undirected edge $(i,j)\in\EE_u$ has a probability
$\prob_{ij} \in (0,1]$ of being active.
As a result, the actual communication network is a random, time-varying graph
$\GG^t = (\until{N}, \EE^t)$, where $t \in \natural$ represents a universal
time index and $\EE^t\subseteq\EE_u$ is the set of active edges at time $t$.
The set of neighbors of agent $i$ in $\GG^t$ is denoted by
$\nbrs_i^t = \left\{j \in \until{N} \mid (i,j) \in \EE^t \right\}$. Consistently,
the set of neighbors of agent $i$ in the underlying graph $\GG_u$
is denoted by $\nbrs_{i,u}$.

Let us define $\rvedge_{ij}^t$ as the Bernoulli random variable that is equal
to $1$ if $(i,j) \in \EE^t$ and $0$ otherwise, for all $(i,j) \in \EE_u$ with
$j > i$ and $t \ge 0$.
The following assumption is made.
\begin{assumption}
\label{ass:iid_var}
  For all $(i,j) \in \EE_u$ with $j > i$, the random variables
  $\{\rvedge_{ij}^t\}_{t \ge 0}$ are independent and identically
  distributed (i.i.d.). Moreover, for all $t \ge 0$, the random variables
  $\{\rvedge_{ij}^t\}_{(i,j) \in \EE_u, \: j > i}$ are mutually independent.
  \oprocend
\end{assumption}

\noindent A pictorial representation of the time-varying communication
model is provided in Figure~\ref{fig:network_model}.
\begin{figure}[htbp]\centering
\vspace{0.15cm}

  \includegraphics{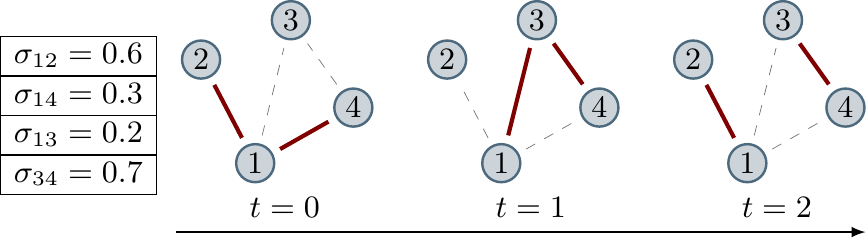}
  
  \caption{
    Example of random time-varying network with $N = 4$ agents.
    Active edges are denoted with red lines, while inactive edges are depicted
    with dashed gray lines. The (connected) underlying graph is the union of
    all such edges, and the activation probabilities are specified in the table.
  }
\label{fig:network_model}
\end{figure}

\subsection{Distributed Algorithm Description}
\label{sec:algorithm_description}

Let us now introduce the \algname/ (\algacronym/) algorithm to compute an optimal solution
$(\bx_1^\star, \ldots, \bx_N^\star)$ of problem~\eqref{eq:problem_original}.
Informally, the algorithm works as follows.
Each agent stores and updates a local solution estimate $\bx_i^t \in \real^{n_i}$
and the auxiliary variables $\rho_i^t \in \real, \bmu_i^t, \by_i^t \in \real^{S}$.
At the beginning, the variable $\by_i^t$ is initialized such that
$\sum_{i=1}^N \by_i^0 = \0$ (e.g., $\by_i^0 = \0$ for all $i$).
At each iteration $t$, agents solve a local optimization problem using the
current value of $\by_i^t$. The variables $(\bx_i^t, \rho_i^t)$ are set to the
primal solution of this problem, where $\bx_i^t$ forms an estimate of
$\bx_i^\star$ and $\rho_i^t$ is \GN{%
a transient} violation of the coupling
constraints (more details are given in Section~\ref{sec:relaxation_primal_decomp}).
The variable $\bmu_i^t$ is set to the dual solution of the problem and,
together with the information gathered from neighbors,
is used to update $\by_i^t$.

Formally, let $\alpha^t \ge 0$ denote the step-size and let $M > 0$ be a
tuning parameter (see Section~\ref{sec:discussion_M} for a discussion).
The next table summarizes the \algacronym/ algorithm from the perspective of node $i$,
where the notation ``$\bmu_i :$'' in~\eqref{eq:alg_local_prob}
means that $\bmu_i$ is the Lagrange multiplier associated to %
$\bg_i(\bx_i) \leq \by_i^t + \rho_i\1$.

\begin{algorithm}[H]
\renewcommand{\thealgorithm}{}
\floatname{algorithm}{Algorithm}

  \begin{algorithmic}[0]
    
    \Statex \textbf{Initialization}: $\by_i^0$ such that $\sum_{i=1}^N \by_i^0 = \0$
    \medskip

    \Statex %
    \textbf{For} $t = 0, 1,  2, \ldots$
    \medskip
    
      \StatexIndent[0.75]
      \textbf{Compute} $((\bx_i^t, \rho_i^t), \: \bmu_i^t)$ as a primal-dual
      solution of
      \begin{align}
      \label{eq:alg_local_prob}
      \begin{split}
        \min_{\bx_i, \rho_i} \hspace{1.2cm} &\: f_i(\bx_i) + M \rho_i
        \\
        \subj \:
        \:\: \bmu_i : \:
        & \: \bg_i(\bx_i) \leq \by_i^t + \rho_i\1
        \\
        & \: \bx_i \in X_i, \: \: \rho_i \ge 0
      \end{split}
      \end{align}

      \StatexIndent[0.75]
      \textbf{Gather} $\bmu_j^t$ from $j \in \nbrs_i^t$ and update
      \begin{align}
        \label{eq:alg_update}
        \by_i^{t+1} = \by_i^t + \alpha^t \sum_{j \in \nbrs_i^t} \big( \bmu_i^t - \bmu_j^t \big)
      \end{align}

  \end{algorithmic}
  \caption{\algacronym/}
  \label{alg:algorithm}
\end{algorithm}

\GN{
The algorithmic updates of \algacronym/ are inspired to the scheme proposed
in~\cite{notarnicola2017constraint}, where different agent states are considered
in place of $\by_i$. This notational variation reflects the different analysis approach
of \algacronym/ based on primal decomposition.}

Some appealing features of the \algacronym/ are worth highlighting.
The algorithm naturally preserves privacy of all the agents, in the sense
they do not communicate any of their private information (such as the 
local cost $f_i$, the local constraint $X_i$ or the local solution 
estimate $\bx_i^t$).
In addition, the algorithm is scalable, i.e., the amount of local computation
only depends on the number of neighbors and not on the network size.

In order to state the main result of this paper, let us make the
following assumption on the step-size sequence.
\begin{assumption}
  \label{ass:stepsize}
  The step-size sequence $\{\alpha^t\}_{t \ge 0}$, with each $\alpha^t \ge 0$,
  satisfies $\sum_{t=0}^\infty \alpha^t \!=\! \infty$ and
  $\sum_{t=0}^\infty (\alpha^t)^2 \!< \!\infty$.\oprocend
\end{assumption}
Next we provide the convergence properties of \algacronym/.
Despite its simple form, the analysis is quite involved and requires several
technical tools that will be provided in the forthcoming sections.
\begin{theorem}
\label{thm:convergence}
	Let Assumptions~\ref{ass:problem},~\ref{ass:slater},~\ref{ass:iid_var}
	and~\ref{ass:stepsize} hold.
	Moreover, let $\bmu^\star$ be an optimal Lagrange multiplier of
	problem~\eqref{eq:problem_original} associated to the constraint
	$\sum_{i=1}^N \bg_i(\bx_i) \le \0$ and assume $M > \|\bmu^\star\|_1$.
  Consider a sequence $\{\bx_i^t, \rho_i^t\}_{t \ge 0}, \: i \in \until{N}$
  generated by the \algacronym/ algorithm with allocation
  vectors $\by_i^0$ initialized such that $\sum_{i=1}^N \by_i^0 = \0$.
  Then, almost surely,
  \begin{enumerate}
    \item[(i)] $\sum_{i=1}^N \big( f_i(\bx_i^t) + M \rho_i^t \big) \to f^\star$
      as $t \to \infty$, where $f^\star$ is the optimal cost
      of~\eqref{eq:problem_original};
    
    \item[(ii)] every limit point of $\{(\bx_1^t, \ldots\, \bx_N^t)\}_{t \ge 0}$
      is an optimal (feasible) solution of~\eqref{eq:problem_original}.
    \oprocend
  \end{enumerate}
\end{theorem}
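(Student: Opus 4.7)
The plan is to interpret \algacronym/ as a randomized block subgradient method applied to an equivalent unconstrained reformulation, and then lift convergence in the transformed variables back to the original quantities. As a first step, I would introduce the local value function
\begin{align*}
  p_i(\by_i) := \min_{\bx_i \in X_i,\, \rho_i \ge 0}\: \bigl\{ f_i(\bx_i) + M\rho_i \:\:\big|\:\: \bg_i(\bx_i) \le \by_i + \rho_i \1 \bigr\},
\end{align*}
and consider the \emph{master problem} $\min \sum_{i=1}^N p_i(\by_i)$ subject to $\sum_{i=1}^N \by_i = \0$. Two facts are needed here: (a) since the added penalty $M\rho_i$ is an exact penalty when $M > \|\bmu^\star\|_1$ (by standard Lagrangian arguments using Assumptions~\ref{ass:problem} and~\ref{ass:slater}), this master problem has the same optimal value $f^\star$ as~\eqref{eq:problem_original}; (b) by a classical envelope-type argument, the Lagrange multiplier $\bmu_i^t$ computed by~\eqref{eq:alg_local_prob} satisfies $-\bmu_i^t \in \partial p_i(\by_i^t)$.

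Next I would eliminate the linear constraint $\sum_i \by_i = \0$ via a graph-induced change of variables. Let $B$ be the incidence matrix of the underlying graph $\GG_u$ (with an arbitrary orientation $i<j$ on each edge), and write $\by = (B \otimes I_S)\bz + \by^0$, where $\bz$ has one block $\bz_{ij} \in \real^S$ per edge $(i,j) \in \EE_u$. Since $\GG_u$ is connected, every $\by$ with $\sum_i \by_i = \0$ is representable in this form, so the master problem is equivalent to the \emph{unconstrained} problem $\min_\bz \Phi(\bz) := \sum_{i=1}^N p_i(\by_i(\bz))$. A direct chain-rule computation shows that the edge-block subgradient is $\partial_{\bz_{ij}} \Phi \ni -(\bmu_i - \bmu_j)$. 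Defining $\bz_{ij}^t$ implicitly by the update $\bz_{ij}^{t+1} = \bz_{ij}^t + \alpha^t (\bmu_i^t - \bmu_j^t)$ whenever $(i,j) \in \EE^t$ and $\bz_{ij}^{t+1} = \bz_{ij}^t$ otherwise, I would verify that the resulting $\by_i(\bz^t)$ exactly matches~\eqref{eq:alg_update}. Hence \algacronym/ is nothing but a randomized block subgradient method on $\Phi(\bz)$ where, at iteration $t$, the blocks indexed by active edges $\EE^t$ are simultaneously updated.

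With this identification in hand, I would invoke the block subgradient convergence result announced in Section~\ref{sec:block_subgrad_method} (which allows non-uniform, mutually independent block-activation probabilities consistent with Assumption~\ref{ass:iid_var}). This yields almost sure convergence of $\Phi(\bz^t)$ to $f^\star$, and boundedness of $\{\bz^t\}$ around an optimal $\bz^\star$; translating back, $\sum_i p_i(\by_i^t) \to f^\star$ almost surely. Since $p_i(\by_i^t) = f_i(\bx_i^t) + M\rho_i^t$ by construction, claim~(i) follows. The main obstacle of the whole argument is precisely this step: carefully handling \emph{multiple simultaneous block updates} with non-uniform probabilities (standard randomized block subgradient theory typically assumes a single block per iteration), and propagating the randomness through the non-uniqueness of $\bmu_i^t$, which is only a \emph{selection} of a subgradient.

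Finally, for claim~(ii), I would exploit compactness of each $X_i$ (Assumption~\ref{ass:problem}) and boundedness of $\rho_i^t$ (which follows from the master-problem analysis, since $p_i$ is bounded on bounded sets of $\by_i$) to extract limit points $(\bx_1^\infty, \ldots, \bx_N^\infty, \rho_1^\infty, \ldots, \rho_N^\infty)$ along a subsequence. Along that subsequence, $\by_i^t \to \by_i^\infty$ with $\sum_i \by_i^\infty = \0$, and the feasibility $\bg_i(\bx_i^\infty) \le \by_i^\infty + \rho_i^\infty \1$ and optimality (via~(i) and lower semicontinuity of $f_i$) propagate to the limit. A separate argument, using the exact penalty property $M > \|\bmu^\star\|_1$ at the limit optimization problem, forces $\rho_i^\infty = 0$, hence $\sum_i \bg_i(\bx_i^\infty) \le \0$, i.e., feasibility for~\eqref{eq:problem_original}. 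Combined with the cost-value convergence, this shows $(\bx_1^\infty,\ldots,\bx_N^\infty)$ is an optimal solution.
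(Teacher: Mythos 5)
Your proposal is correct and follows essentially the same route as the paper: primal decomposition into subproblems $p_i$ with the exact-penalty relaxation, elimination of the coupling constraint $\sum_i \by_i = \0$ via the incidence-matrix change of variables, identification of \algacronym/ with a randomized multi-block subgradient method on the unconstrained reformulation, and a compactness/exact-penalty argument for asymptotic feasibility in part \emph{(ii)}. The only differences are cosmetic (an affine offset $\by^0$ and oriented edge blocks of size $S$ instead of the paper's $2S$-blocks per undirected edge), so no further comparison is needed.
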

\GN{
In principle, in order to satisfy the assumption $M>\|\bmu^\star\|_1$ in
Theorem~\ref{thm:convergence}, knowledge is needed of the dual optimal
solution $\bmu^\star$. However, this is not necessary in practice, as a
lower bound of $M$ can be efficiently computed when a Slater point is known.
In Section~\ref{sec:discussion_M}, we provide a sufficient condition to select
valid values of $M$ without any knowledge on $\bmu^\star$.

Note also that the algorithm does not employ any averaging mechanism typically
appearing in dual algorithms when the cost functions are not strictly convex.
However, thanks to the primal decomposition approach, we are still able to prove
asymptotic feasibility (other than optimality) of the sequence
$\{(\bx_1^t, \ldots\, \bx_N^t)\}_{t \ge 0}$.
As shown in Section~\ref{sec:simulation_comparison_dual_subg}, the absence of
running averages allows for faster practical convergence, compared to existing methods.
}

\GN{
\begin{remark}[Computational load of \algacronym/]
	As many of duality-based distributed algorithms, \algacronym/ requires the repeated
	solution of local optimization problems and also to compute the
	Lagrange multiplier $\bmu_i^t$ associated to the inequality constraint.
  As a matter of fact, the computation of $\bmu_i^t$ has a minor
  impact on the computational load.
	Indeed, if a solver based on interior-point methods is used,
	it will provide $\bmu_i^t$ as a byproduct of the solution process.
	Alternatively, denoting $(\bx_i^t, \rho_i^t)$ the optimal solution at time $t$,
	a Lagrange multiplier $\bmu_i^t$ can be easily computed as the solution
	of a linear system with positivity constraints
	(cf.~\cite[Proposition 5.1.5]{bertsekas1999nonlinear}), i.e.,
	\begin{align*}
	  \bmu_{i,s} (\bg_{i,s}(\bx_i^t) - \by_{i,s}^t - \rho_i^t) = 0
	  \quad \forall \: s, \hspace{0.3cm} \text{with }
	  	\bmu_i \ge 0.
	  	\eqoprocend
	\end{align*}
\end{remark}
}

\subsection{\GN{Preliminaries on} Relaxation and Primal Decomposition}
\label{sec:relaxation_primal_decomp}

\GN{
In this subsection we recall two preliminary building
blocks for the algorithm analysis, namely the relaxation and 
the primal decomposition approach for problem~\eqref{eq:problem_original}
originally introduced in~\cite{silverman1972primal,bertsekas1999nonlinear,notarnicola2017constraint,camisa2018primal}.}
In a primal decomposition scheme, also called right-hand side allocation,
the coupling constraints $\sum_{i=1}^N \bg_i(\bx_i) \le \0$
are interpreted as a limited resource to be shared among nodes.
A two-level structure is formulated, where independent subproblems,
with a fixed resource allocation, are ``coordinated'' by a master problem
determining the optimal resource allocation.
We will apply such approach to an equivalent, relaxed version of
problem~\eqref{eq:problem_original}.
Formally, consider the following modified version of
problem~\eqref{eq:problem_original},
\begin{align}
\label{eq:problem_relaxed}
\begin{split}
  \min_{\bx_1, \ldots, \bx_N, \rho} \: & \: \sum_{i=1}^N f_i(\bx_i) + M \rho
  \\
  \subj \: & \: \sum_{i=1}^N \bg_i(\bx_i) \le \rho \1,
  \\
  & \: \rho \ge 0, \:\: \bx_i \in X_i, \hspace{0.5cm} i \in \until{N},
\end{split}
\end{align}
where $M > 0$ is a scalar \GN{and we added the
scalar optimization variable $\rho$. In principle, the new variable
allows for a violation of the coupling constraints (in this sense,
we say that problem~\eqref{eq:problem_relaxed} is a relaxed version of
problem~\eqref{eq:problem_original}). However, 
if the constant $M$ appearing in the penalty term $M \rho$ is large enough,
problem~\eqref{eq:problem_relaxed} is equivalent
to~\eqref{eq:problem_original}, as we recall in the next lemma.}
\begin{lemma}[\cite{notarnicola2017constraint}, Proposition III.3]
\label{lemma:relaxation}
  Let Assumptions~\ref{ass:problem} and~\ref{ass:slater} hold.
  Moreover, let $M$ be such that $M > \|\bmu^\star\|_1$, with $\bmu^\star \in \real^S$
  an optimal Lagrange multiplier for problem~\eqref{eq:problem_original}
  associated to the constraint $\sum_{i=1}^N \bg_i(\bx_i) \le \0$.
  Then, the optimal solutions of the relaxed problem~\eqref{eq:problem_relaxed} are
  in the form $(\bx_1^\star, \ldots, \bx_N^\star, 0)$, where $(\bx_1^\star, \ldots, \bx_N^\star)$
  is an optimal solution of~\eqref{eq:problem_original}, i.e., the solutions
  of~\eqref{eq:problem_relaxed} must have $\rho = 0$.
  Moreover, the optimal costs of~\eqref{eq:problem_relaxed}
  and~\eqref{eq:problem_original} are equal.
\oprocend
\end{lemma}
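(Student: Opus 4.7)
The plan is to use a classical exact penalty argument based on convex duality. First, I would invoke Assumption~\ref{ass:slater} (Slater's condition) together with Assumption~\ref{ass:problem} to guarantee that strong duality holds for problem~\eqref{eq:problem_original} and that an optimal Lagrange multiplier $\bmu^\star \ge \0$ for the coupling constraint exists and is finite. Denoting the optimal costs of~\eqref{eq:problem_original} and~\eqref{eq:problem_relaxed} by $f^\star$ and $F^\star$ respectively, I would split the argument into two easy directions.

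For the upper bound $F^\star \le f^\star$, I would observe that plugging any optimal solution $(\bx_1^\star, \ldots, \bx_N^\star)$ of~\eqref{eq:problem_original} together with $\rho = 0$ into~\eqref{eq:problem_relaxed} produces a feasible point with cost $f^\star$. For the reverse inequality, I would rely on the Lagrangian lower bound $\sum_{i=1}^N f_i(\bx_i) + (\bmu^\star)^\top \sum_{i=1}^N \bg_i(\bx_i) \ge f^\star$, which holds for every choice $\bx_i \in X_i$ by strong duality. For any feasible $(\bx_1, \ldots, \bx_N, \rho)$ of~\eqref{eq:problem_relaxed}, combining this bound with the constraint $\sum_{i=1}^N \bg_i(\bx_i) \le \rho \1$ and with $\bmu^\star \ge \0$ yields $\sum_{i=1}^N f_i(\bx_i) + M\rho \ge f^\star + (M - \|\bmu^\star\|_1)\rho$, which is at least $f^\star$ because $M > \|\bmu^\star\|_1$ and $\rho \ge 0$. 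Hence $F^\star = f^\star$.

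To conclude, I would apply the same chain of inequalities to any optimal triple $(\bx_1^\star, \ldots, \bx_N^\star, \rho^\star)$ of~\eqref{eq:problem_relaxed}: all inequalities must become equalities, which forces $(M - \|\bmu^\star\|_1)\rho^\star = 0$ and therefore $\rho^\star = 0$. The coupling constraint of~\eqref{eq:problem_relaxed} then reduces to $\sum_{i=1}^N \bg_i(\bx_i^\star) \le \0$, so $(\bx_1^\star, \ldots, \bx_N^\star)$ is feasible, and hence optimal, for~\eqref{eq:problem_original}.

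I do not expect any real technical obstacle, since the decisive lever is the exact penalty condition $M > \|\bmu^\star\|_1$, which the statement already assumes. The only subtlety worth flagging is that invoking strong duality and the existence of a finite, nonnegative optimal multiplier $\bmu^\star$ requires appealing to a standard convex duality result under Assumptions~\ref{ass:problem} and~\ref{ass:slater} (e.g., \cite[Prop.~5.3.1]{bertsekas1999nonlinear}); once that is in hand, the proof is just algebraic manipulation of the Lagrangian inequality.
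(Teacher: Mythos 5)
Your proof is correct and complete: the two-sided cost comparison via the Lagrangian lower bound $L(\bx,\bmu^\star)\ge q(\bmu^\star)=f^\star$, the bound $(\bmu^\star)^\top\sum_i \bg_i(\bx_i)\le \rho\|\bmu^\star\|_1$ from $\bmu^\star\ge\0$, and the tightness argument forcing $(M-\|\bmu^\star\|_1)\rho^\star=0$ together give exactly the claimed conclusion. The paper does not prove this lemma itself but defers to \cite{notarnicola2017constraint}, where the result is established by the same standard exact-penalty argument, so your approach matches the intended one.
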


The primal decomposition scheme applied to problem~\eqref{eq:problem_relaxed}
can be formulated as follows.
For all $i \in \until{N}$ and $\by_i \in \real^S$, the $i$-th \emph{subproblem} is
\begin{align}
\label{eq:primal_decomp_subprob}
\begin{split}
  p_i(\by_i) \triangleq \min_{\bx_i, \rho_i} \: & \: f_i(\bx_i) + M \rho_i
  \\
  \subj \: & \: \bg_i(\bx_i) \le \by_i + \rho_i \1
  \\
  & \: \rho_i \ge 0, \:\: \bx_i \in X_i,
\end{split}
\end{align}
where $\by_i \in \real^S$ is a (given) local \emph{allocation} for node $i$ and
$p_i(\by_i)$ denotes the optimal cost as a function of $\by_i$.
The local allocations are ``coordinated'' by the \emph{master} problem, i.e.,
\begin{align}
\label{eq:primal_decomp_master}
\begin{split}
  \min_{\by_1, \ldots, \by_N} \: & \: \sum_{i=1}^N p_i(\by_i)
  \\
  \subj \: & \: \sum_{i=1}^N \by_i = \0.
\end{split}
\end{align}
In the next, we will denote the cost function
of~\eqref{eq:primal_decomp_master} as $p(\by) = \sum_{i=1}^N p_i(\by_i)$,
where $\by = (\by_1, \ldots, \by_N) \in \real^{SN}$.
Notice that subproblem~\eqref{eq:primal_decomp_subprob} is always feasible
for all $\by_i \in \real^S$.
The following lemma establishes the equivalence between the master
problem~\eqref{eq:primal_decomp_master} and the relaxed
problem~\eqref{eq:problem_relaxed}.
\begin{lemma}[\cite{silverman1972primal}]
\label{lemma:primal_decomp_equivalence}
  Let Assumption~\ref{ass:problem} hold. Then,
  problems~\eqref{eq:problem_relaxed} and~\eqref{eq:primal_decomp_master}
  are equivalent, in the sense that \emph{(i)} the optimal costs are equal, \emph{(ii)} if
  $(\bx_1^\star, \ldots, \bx_N^\star)$ is an optimal solution of~\eqref{eq:problem_relaxed} and
  $(\by_1^\star, \ldots, \by_N^\star)$ is an optimal solution of~\eqref{eq:primal_decomp_master},
  then $(\bx_i^\star, 0)$ is an optimal solution of~\eqref{eq:primal_decomp_subprob},
  with $\by_i = \by_i^\star$, for all $i \in \until{N}$.
  \oprocend
\end{lemma}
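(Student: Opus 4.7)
The plan is to establish the equivalence in two stages: first, prove the equality of the optimal values via mutual constructive feasibility maps between the two problems that preserve or do not increase the cost, and then leverage these constructions to deduce the claimed correspondence between optimal solutions. The central technical ingredient is to choose allocations $\by_i$ that simultaneously respect the master coupling $\sum_i \by_i = \0$ and align the aggregated subproblem cost with the relaxed cost.

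For the bound master-optimum $\le$ relaxed-optimum, I would take any relaxed-feasible triple $(\bx_1, \ldots, \bx_N, \rho)$, introduce the slack $\mathbf{s} := \rho \1 - \sum_i \bg_i(\bx_i) \ge \0$, and set
\[
\by_i := \bg_i(\bx_i) - \tfrac{\rho}{N}\1 + \tfrac{\mathbf{s}}{N}, \quad i \in \until{N}.
\]
A direct verification then gives $\sum_i \by_i = \0$ (so $\by$ is master-feasible) and shows that $(\bx_i, \rho/N)$ is feasible for the $i$-th subproblem at $\by_i$, since $\bg_i(\bx_i) \le \by_i + (\rho/N)\1$ reduces to the trivially satisfied $\mathbf{s}/N \ge \0$. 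Hence $p_i(\by_i) \le f_i(\bx_i) + M \rho/N$; summing over $i$ and minimizing over relaxed-feasible points yields the claimed bound. For the reverse inequality, I would pick any master-feasible $\by$ together with subproblem minimizers $(\bx_i^\circ, \rho_i^\circ)$, whose existence follows from Assumption~\ref{ass:problem}. Summing the individual subproblem constraints and using $\sum_i \by_i = \0$ gives $\sum_i \bg_i(\bx_i^\circ) \le \bar{\rho}\,\1$ with $\bar{\rho} := \sum_i \rho_i^\circ \ge 0$, so that $(\bx_1^\circ, \ldots, \bx_N^\circ, \bar{\rho})$ is relaxed-feasible with cost $\sum_i p_i(\by_i)$. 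The two bounds together yield equality of the optimal values, proving item~\emph{(i)}.

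For the correspondence in~\emph{(ii)}, I would apply the first construction to an optimal relaxed solution $(\bx_1^\star, \ldots, \bx_N^\star, \rho^\star)$: the resulting $\by^\star$ is master-optimal, and since the total cost $\sum_i \big(f_i(\bx_i^\star) + M\rho^\star/N\big)$ equals the common optimum while each $p_i(\by_i^\star)$ is a lower bound, tightness of the chain of inequalities forces $p_i(\by_i^\star) = f_i(\bx_i^\star) + M\rho^\star/N$, so that $(\bx_i^\star, \rho^\star/N)$ attains the subproblem optimum at $\by_i^\star$. The main obstacle I anticipate is to match this with the statement's specific form $(\bx_i^\star, 0)$, which requires showing that $\rho^\star = 0$ at the relaxed optimum (by the same slack-based reasoning applied to the coupling constraint of~\eqref{eq:problem_relaxed}), and to extend the correspondence from the particular $\by^\star$ produced by the construction to an arbitrary master-optimal allocation via the convex-analytic machinery of primal decomposition as in~\cite{silverman1972primal}.
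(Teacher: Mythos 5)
The paper never proves this lemma: it is imported from \cite{silverman1972primal} and stated with the end-of-proof mark directly after the statement, so there is no in-paper argument to compare against. Judged on its own merits, your proof of part \emph{(i)} is correct and complete. The allocation $\by_i = \bg_i(\bx_i) - \tfrac{\rho}{N}\1 + \tfrac{1}{N}\mathbf{s}$ indeed sums to $\0$ and renders $(\bx_i,\rho/N)$ feasible for the $i$-th subproblem, giving ``master $\le$ relaxed''; the reverse inequality, obtained by summing the subproblem constraints at the minimizers (which exist because $X_i$ is compact, $f_i$ is continuous, and $M\rho_i$ is coercive in $\rho_i$), is also sound. Since the paper only ever invokes the cost-equality part of this lemma (in the proof of Theorem~\ref{thm:convergence}), this is the load-bearing piece and you have it.

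Part \emph{(ii)} is where the genuine gap lies, and the two obstacles you flag at the end cannot be closed in the way you suggest. First, $\rho^\star=0$ at the optimum of \eqref{eq:problem_relaxed} does \emph{not} follow from Assumption~\ref{ass:problem} by any slack-based reasoning: it is exactly the content of Lemma~\ref{lemma:relaxation} and requires $M>\|\bmu^\star\|_1$, which is not among the hypotheses here; for small $M$ and steeply decreasing $f_i$ the relaxed optimum has $\rho^\star>0$, and then $(\bx_i^\star,0)$ need not even be subproblem-feasible. Second, and more seriously, the ``extension from the particular $\by^\star$ produced by the construction to an arbitrary master-optimal allocation'' is not a routine convex-analytic step --- the correspondence genuinely fails for arbitrary pairings. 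Take $N=2$, $S=1$, $X_1=X_2=[0,2]$, $f_1\equiv 0$, $f_2(\bx_2)=\bx_2$, $\bg_i(\bx_i)=\bx_i-1$: then $(\bx_1^\star,\bx_2^\star,\rho^\star)=(2,0,0)$ is optimal for \eqref{eq:problem_relaxed}, $(\by_1^\star,\by_2^\star)=(0,0)$ is optimal for \eqref{eq:primal_decomp_master}, yet $(\bx_1^\star,0)=(2,0)$ violates $\bg_1(\bx_1^\star)\le\by_1^\star$ and so is not an optimal solution of \eqref{eq:primal_decomp_subprob} at $\by_1^\star$. Your tightness argument therefore proves \emph{(ii)} only for the specific allocation $\by^\star$ you build from $\bx^\star$ --- an existence statement --- and that is in fact the most that can be proved; the deferral to the ``machinery of primal decomposition'' will not supply the missing general case.
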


Thanks to Lemma~\ref{lemma:relaxation} and Lemma~\ref{lemma:primal_decomp_equivalence},
solving problem~\eqref{eq:problem_original} is equivalent to solving
problem~\eqref{eq:primal_decomp_master}. We will show
that indeed the \algacronym/ algorithm solves~\eqref{eq:primal_decomp_master},
thereby indirectly providing a solution to~\eqref{eq:problem_original}.
Consider now the update~\eqref{eq:alg_update}.
Owing to the discussion in~\cite[Section 5.4.4]{bertsekas1999nonlinear},
can be rewritten as
\begin{align*}
  \by_i^{t+1} = \by_i^{t}
    - \alpha^t \sum_{j \in \nbrs_i^t} \big( \subgrad p_i(\by_i^t) - \subgrad p_j(\by_j^t) \big),
\end{align*}
for $i \in \until{N}$. This equivalent form highlights that,
at each iteration $t$, agents adjust their local allocation $\by_i^t$
by performing a subgradient-like step, based only on local and neighboring
information. Note also that, by direct calculation, using the fact that
the underlying graph is undirected, one can see that
$\sum_{i=1}^N \by_i^{t} = \sum_{i=1}^N \by_i^{0} = \0$
for all $t$,
which means that the allocation sequence produced by the algorithm
satisfies the constraint $\sum_{i=1}^N \by_i = \0$ appearing
in problem~\eqref{eq:primal_decomp_master} at each time step $t$.

\GN{
\begin{remark}[On the variables $\rho_i$]
Finally, let us comment on the role of the variables $\rho_i$
appearing in problem~\eqref{eq:alg_local_prob}.
If we impose $\rho_i = 0$, problem~\eqref{eq:alg_local_prob}
may become infeasible for some values of $\by_i$.
Thus, the variable $\rho_i$ guarantees that
the agents can always select a sufficiently large value
of $\rho_i$ in order to satisfy the constraint
$\bg_i(\bx_i) \le \by_i^t + \rho_i\1$.
By Theorem~\ref{thm:convergence}, the sequences $\{\rho_i^t\}_{t \ge 0}$
converge to zero and, hence, they represent only a temporary violation.

Strictly speaking, if one wanted to apply the primal decomposition method
directly to problem~\eqref{eq:problem_original} (or, equivalently,
to problem~\eqref{eq:problem_relaxed} with $\rho_i = 0$),
additional constraints of the type $\by_i \in Y_i$, $i \in \until{N}$
should be included in problem~\eqref{eq:primal_decomp_master},
with each $Y_i$ being the set of $\by_i$ such that the subproblems are
feasible \cite[Section 6.4.2]{bertsekas1999nonlinear}.
However, as it will be clear from the forthcoming analysis, this would prevent us
from obtaining a purely distributed scheme (in particular, problem~\eqref{eq:problem_z}
would not be unconstrained).
\oprocend
\end{remark}
}

\section{Randomized Block Subgradient for Convex Problems}
\label{sec:block_subgrad_method}

In this section, we formulate a (centralized) randomized block subgradient
method for convex problems and formally prove its convergence.
This algorithm will be used in the next to solve an equivalent form of
problem~\eqref{eq:primal_decomp_master}, where the update of blocks is
associated to the activation of edges in the graph.
The results provided here hold for a more general class of optimization
problems, therefore for this section we temporarily stop our
discussion to formalize and analyze the randomized block subgradient method.
Subsequently, we loop back to the main focus of this work and apply the results
of this section for the analysis of \algacronym/.

Let us consider the unconstrained convex problem
\begin{align}
\label{eq:prob_block_subgrad}
\begin{split}
  \min_{\theta \in \real^m} \: & \: \varphi(\theta),
\end{split}
\end{align}
where $\theta$ is the optimization variable and
$\map{\varphi}{\real^m}{\real}$ is a convex function.
We assume that problem~\eqref{eq:prob_block_subgrad} has finite optimal cost,
denoted by $\varphi^\star$, and that (at least) an optimal solution
$\theta^\star \in \real^m$ exists, such that
$\varphi^\star = \varphi(\theta^\star)$.

Let us consider a partition of $\real^m$ into $B \in \natural$ parts, i.e.,
$\real^m = \real^{m_1} \times \cdots \times \real^{m_B}$, such that
$m = \sum_{\ell=1}^B m_\ell$. Therefore, the optimization variable is
the stack of $B$ blocks,
\begin{align*}
  \theta = (\theta_1, \ldots, \theta_B),
\end{align*}
where $\theta_\ell \in \real^{m_\ell}$ for all $\ell \in \until{B}$.
Now, we develop a subgradient method with block-wise updates to solve
problem~\eqref{eq:prob_block_subgrad}. At each iteration $t \in \natural$,
each block $\ell$ is updated with a probability $\prob_\ell > 0$.

We stress that according to the considered model,
blocks can have different update probabilities and
multiple blocks can be updated simultaneously.

For all $t$, we denote by
$B^t \subseteq \until{B}$ the index set of the blocks selected at time $t$.
For all $\ell \in \until{B}$ and $t \ge 0$, let us define
$\rvedge_{\ell}^t$ as the Bernoulli random variable that is equal
to $1$ if $\ell \in B^t$ and $0$ otherwise.
The following assumption is made (compare with Assumption~\ref{ass:iid_var}).
\begin{assumption}
\label{ass:iid_var_blocks}
  For all $\ell \in \until{B}$, the random variables
  $\{\rvedge_{\ell}^t\}_{t \ge 0}$ are independent and identically
  distributed (i.i.d.). Moreover, for all $t \ge 0$, the random variables
  $\{\rvedge_{\ell}^t\}_{\ell \in \until{B}}$ are mutually independent.
  \oprocend
\end{assumption}

The algorithm considered here is based on a subgradient method.
However, at each iteration $t$, only the blocks in $B^t$ are updated, i.e.,
\begin{align}
\theta_\ell^{t+1}
=
\begin{cases}
  \theta_\ell^t - \alpha^t [\subgrad \varphi(\theta^t)]_\ell,
  \hspace{0.5cm}
  & \text{if } \ell \in B^t,
\\
  \theta_\ell^t,
  & \text{if } \ell \notin B^t,
\end{cases}
\label{eq:block_algorithm}
\end{align}
where $\alpha^t$ is the step-size.
Note that algorithm~\eqref{eq:block_algorithm} allows for multiple block updates at once and,
furthermore, blocks have non-uniform update probabilities.
To the best of our knowledge, this general block-subgradient method has not been studied in the literature.
Therefore, we now provide the convergence proof for
algorithm~\eqref{eq:block_algorithm}.
\begin{theorem}
  Let Assumption~\ref{ass:iid_var_blocks} hold and let the step-size sequence
  $\{\alpha^t\}_{t\geq 0}$ satisfy Assumption~\ref{ass:stepsize}.
  Moreover, assume the subgradients of $\varphi$ are block-wise bounded, i.e.,
  assume for all $\ell \in \until{B}$ there exists $C_\ell > 0$ such that
  $\| [ \subgrad \varphi(\theta) ]_\ell \| \le C_\ell$ for all
  $\theta \in \real^{m}$.
  Consider a sequence $\{\theta^t\}_{t \ge 0}$
  generated by algorithm~\eqref{eq:block_algorithm},
  initialized at any $\theta^0 \in \real^{m}$.
  Then, almost surely, it holds
  \begin{align*}
    \lim_{t \to \infty} \varphi(\theta^t) = \varphi^\star.
  \end{align*}
\label{thm:block_subgrad_method}
\end{theorem}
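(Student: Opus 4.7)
The plan is to adapt the classical stochastic Fejér-monotonicity argument for subgradient methods to the present setting, where the randomness enters via the selection of the block subset $B^t$, and where blocks have non-uniform activation probabilities $\sigma_\ell$. The key device is to work with a weighted Euclidean metric that exactly cancels the bias introduced by non-uniform sampling.

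\textbf{Step 1: weighted metric and one-step Fejér-type inequality.} Let $W$ be the block-diagonal matrix whose $\ell$-th block equals $\tfrac{1}{\sigma_\ell}I_{m_\ell}$, and write $\|\cdot\|_W$ for the induced norm. Fix any optimal $\theta^\star$. Expanding the update~\eqref{eq:block_algorithm} block by block and using $(\nu_\ell^t)^2=\nu_\ell^t$ (Bernoulli) gives
\begin{align*}
\|\theta^{t+1}-\theta^\star\|_W^2
= \|\theta^t-\theta^\star\|_W^2
- 2\alpha^t\sum_{\ell=1}^B \tfrac{\nu_\ell^t}{\sigma_\ell}[\subgrad\varphi(\theta^t)]_\ell^\top(\theta_\ell^t-\theta_\ell^\star)
+ (\alpha^t)^2\sum_{\ell=1}^B \tfrac{\nu_\ell^t}{\sigma_\ell}\|[\subgrad\varphi(\theta^t)]_\ell\|^2.
\end{align*}
Let $\mathcal{F}^t$ denote the natural filtration generated by $\{\theta^0,\ldots,\theta^t\}$. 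By Assumption~\ref{ass:iid_var_blocks}, the selectors $\nu_\ell^t$ are independent of $\mathcal{F}^t$ with $\mathbb{E}[\nu_\ell^t]=\sigma_\ell$, so taking conditional expectation the weight $1/\sigma_\ell$ is cancelled exactly. Using convexity, $\subgrad\varphi(\theta^t)^\top(\theta^t-\theta^\star)\ge\varphi(\theta^t)-\varphi^\star$, and the block-wise subgradient bound $\sum_\ell\|[\subgrad\varphi(\theta^t)]_\ell\|^2\le C^2:=\sum_\ell C_\ell^2$, I obtain
\begin{align*}
\mathbb{E}\bigl[\|\theta^{t+1}-\theta^\star\|_W^2\,\big|\,\mathcal{F}^t\bigr]
\le \|\theta^t-\theta^\star\|_W^2 - 2\alpha^t\bigl(\varphi(\theta^t)-\varphi^\star\bigr) + (\alpha^t)^2 C^2.
\end{align*}

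\textbf{Step 2: supermartingale convergence.} Since $\varphi(\theta^t)-\varphi^\star\ge 0$ and $\sum_t(\alpha^t)^2<\infty$ by Assumption~\ref{ass:stepsize}, the Robbins--Siegmund theorem applies to the nonnegative process $\|\theta^t-\theta^\star\|_W^2$. It yields, almost surely, that $\|\theta^t-\theta^\star\|_W^2$ converges to a finite (random) limit, and that $\sum_{t=0}^\infty \alpha^t(\varphi(\theta^t)-\varphi^\star)<\infty$. Combined with $\sum_t\alpha^t=\infty$, this forces $\liminf_{t\to\infty}\varphi(\theta^t)=\varphi^\star$ almost surely.

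\textbf{Step 3: from $\liminf$ to full convergence.} The convergence of $\|\theta^t-\theta^\star\|_W^2$ implies $\{\theta^t\}$ is almost surely bounded, so any subsequence along which $\varphi(\theta^{t_k})\to\varphi^\star$ admits a sub-subsequence $\theta^{t_{k_j}}\to\tilde\theta$; by continuity of the convex function $\varphi$ on $\real^m$, $\varphi(\tilde\theta)=\varphi^\star$, i.e.\ $\tilde\theta$ is itself an optimizer. Reapplying Step~2 with $\theta^\star$ replaced by $\tilde\theta$ shows $\|\theta^t-\tilde\theta\|_W^2$ converges almost surely; since a subsequence tends to zero, the entire sequence does, and continuity of $\varphi$ yields $\varphi(\theta^t)\to\varphi^\star$.

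\textbf{Main obstacle.} Steps 1--2 are straightforward given the clever choice of the $W$-metric that makes the stochastic gradient unbiased in the right inner product. The genuine difficulty is Step~3: the ``reference optimizer'' $\tilde\theta$ one re-injects into Robbins--Siegmund is itself $\omega$-measurable, so one cannot literally apply the scalar supermartingale inequality to a random target. The standard workaround, which I would invoke, is to exploit that the Fejér-type inequality of Step~1 holds \emph{simultaneously} for every optimizer on a single probability-one event (e.g.\ by taking a countable dense sequence of optimizers in the solution set and using continuity), so the convergence of $\|\theta^t-\tilde\theta\|_W^2$ on almost every sample path is legitimate. This Fejér-to-full-convergence passage is the one part that deserves careful measure-theoretic bookkeeping.
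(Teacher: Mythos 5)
Your proposal is correct and follows essentially the same route as the paper: the same weighted norm $\|\cdot\|_W$ with blocks $\tfrac{1}{\prob_\ell}I_{m_\ell}$ to cancel the non-uniform sampling bias, the same one-step inequality in conditional expectation, and then a supermartingale (Robbins--Siegmund) argument, which the paper invokes by citing \cite[Propositions~8.2.10 and~8.2.13]{bertsekas2003convex} rather than spelling out your Step~3; the measure-theoretic care you flag there is exactly what that cited proposition handles.
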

\begin{proof}\renewcommand{\qedsymbol}{}
  To keep the notation light, let us denote the computed subgradients by
	$\beta^t \triangleq \subgrad \varphi(\theta^t)$. Each block $\ell$ is
	denoted by $\beta_\ell^t = [ \subgrad \varphi(\theta^t) ]_\ell$.
	Moreover, for all $\ell \in \until{B}$, let us define the matrix
	$U_{\ell} \in \real^{m \times m}$, obtained by setting to zero
	in the identity matrix all the blocks on the diagonal, except for the
	$\ell$-th block. Thus, when applied to a vector $\theta \in \real^{m}$,
	all the blocks other than the $\ell$-th one are set to zero, i.e.,
	\begin{align*}
	  [ U_{\ell} \theta ]_\kappa
	  =
	  \begin{cases}
	    \theta_\ell & \text{if } \kappa = \ell,
	    \\
	    \0 & \text{otherwise},
	  \end{cases}
	  \hspace{0.5cm}
	  \forall \: \kappa \in \until{B}.
	\end{align*}
	Moreover, for the sake of analysis, let us define
	\begin{align*}
	  W \triangleq \blkdiag \Big( \frac{1}{\prob_1} I_{m_1}, \: \ldots, \: \frac{1}{\prob_B} I_{m_B} \Big),
	\end{align*}
	where $\blkdiag(\cdot)$ is the (block) diagonal operator
	and we recall that $I_{m_\ell}$ is the $m_\ell \times m_\ell$
	identity matrix.
	Note that $W$ is positive definite, thus we can consider the weighted norm
	$\| \theta \|_W$, for which, by definition, it holds
	\begin{align*}
	  \| \theta \|_W^2 = \sum_{\ell=1}^B \frac{\| \theta_\ell \|^2}{\prob_\ell},
	  \hspace{1cm}
	  \theta \in \real^{m}.
	\end{align*}
	
	Next we analyze algorithm~\eqref{eq:block_algorithm}. Let us focus on an
  iteration $t$ and consider any vector $\theta \in \real^{m}$. As
  for the activated blocks $\ell \in B^t$, it holds
	\begin{align*}
	  \|\theta_{\ell}^{t+1} - \theta_{\ell}\|^2
	  &=
	  \|\theta_{\ell}^t - \alpha^t \beta_\ell^t - \theta_{\ell}\|^2
	  \\
	  & =
	  \|\theta_{\ell}^t - \theta_{\ell}\|^2 + (\alpha^t)^2 \|\beta_\ell^t\|^2
	  \\
	  & \hspace{1cm}
	    - 2\alpha^t (\beta_\ell^t)^\top
	      \big( \theta_\ell^t - \theta_\ell \big),
	  \\
	  & \le
	  \|\theta_{\ell}^t - \theta_{\ell}\|^2 + (\alpha^t)^2 C_{\ell}^2
	  \\
	  & \hspace{1cm}
	    - 2\alpha^t U_\ell (\beta^t)^\top
	      \big( \theta^t - \theta \big),
	  \hspace{0.2cm}
	  \forall \: \ell \in B^t,
	\end{align*}
	where $\|\beta_\ell^t\| \leq C_{\ell}$ holds by assumption.
	As for the other blocks $\ell \notin B^t$, we have
	\begin{align*}
	  \|\theta_{\ell}^{t+1} - \theta_{\ell}\|^2
	  &=
	  \|\theta_{\ell}^t - \theta_{\ell}\|^2,
	  \hspace{1cm}
	  \forall \: \ell \notin B^t.
	\end{align*}
	Let us now write the overall evolution in $W$-norm, i.e.,
	\begin{align}
	  \|\theta^{t+1} \!\!- \theta \|_W^2
	  &=
	  \sum_{\ell \in B^t}
	    \frac{\|\theta_{\ell}^{t+1} - \theta_{\ell}\|^2}{\prob_\ell}
	  +
	  \sum_{\ell \notin B^t} 
	    \frac{\|\theta_{\ell}^{t+1} - \theta_{\ell}\|^2}{\prob_\ell}
	  \nonumber
	  \\
	  & \le
	  \sum_{\ell=1}^B \frac{\|\theta_{\ell}^{t} - \theta_{\ell}\|^2}{\prob_\ell}
	  +
	  (\alpha^t)^2 \sum_{\ell \in B^t} \frac{C_{\ell}^2}{\prob_\ell}
	  \nonumber
	  \\
	  & \hspace{0.4cm}
	  - 2 \alpha^t \bigg( \sum_{\ell \in B^t}
	     \frac{1}{\prob_\ell} U_\ell \bigg) (\beta^t)^\top \big( \theta^t - \theta \big)
	  \nonumber
	  \\
	  & \le
	  \|\theta^{t} - \theta \|_W^2
	  +
	  (\alpha^t)^2 C
	  \nonumber
	  \\
	  & \hspace{0.4cm}
	  - 2 \alpha^t \bigg( \sum_{\ell \in B^t}
	    \frac{1}{\prob_\ell} U_\ell \bigg) (\beta^t)^\top \big( \theta^t - \theta \big),
	\label{eq:basic_ineq}
	\end{align}
	where $C \triangleq \sum_{\ell=1}^B \frac{C_\ell^2}{\prob_\ell} > 0$.
	Regarding the matrix $\sum_{\ell \in B^t} \frac{1}{\prob_\ell} U_\ell$
	appearing in~\eqref{eq:basic_ineq}, its expected value is
	\begin{align}
	  \expv\Bigg[ \sum_{\ell \in B^t} \frac{1}{\prob_\ell} U_\ell \Bigg]
	  = \expv\Bigg[ \sum_{\ell=1}^B \frac{\rvedge_\ell^t}{\prob_\ell} U_\ell \Bigg]
	  = \sum_{\ell=1}^B U_\ell = I_{m}.
	\label{eq:expected_value_u_ell}
	\end{align}
	Now, by taking the conditional expectation of~\eqref{eq:basic_ineq} with
  respect to $\FFF^t = \{\theta^0, \ldots, \theta^t\}$ (namely the sequence generated by
  algorithm~\eqref{eq:block_algorithm} up to iteration $t$),
  we obtain for all $\theta \in \real^{m}$ and $t \ge 0$
	\begin{align*}
	  \expv\Big[ \|\theta^{t+1} \!\!- \theta \|_W^2 \: \big\vert \: \FFF^t \Big]
	  &\stackrel{(a)}{\le}
	  \|\theta^{t} - \theta \|_W^2
	    + (\alpha^t)^2 C
	  \\
	  & \hspace{0.6cm}
	    - 2 \alpha^t (\beta^t)^\top \big( \theta^t - \theta \big),
	  \\
	  &\stackrel{(b)}{\le}
    \|\theta^{t} - \theta \|_W^2
	    + (\alpha^t)^2 C
	  \\
	  & \hspace{0.6cm}
	    - 2 \alpha^t \big( \varphi(\theta^t) - \varphi(\theta) \big),
	\end{align*}
	where in $(a)$ we used~\eqref{eq:expected_value_u_ell} and
	the independence of the drawn blocks from the previous iterations
	(cf.~Assumption~\ref{ass:iid_var_blocks}), and $(b)$ follows
	by definition of subgradient of the function $\varphi$.
	By restricting the above inequality to any optimal solution
	$\theta^\star$ of problem~\eqref{eq:problem_z}, we obtain
  \begin{align}
	  \expv\Big[ \|\theta^{t+1} \!\!- \theta^\star \|_W^2 \: \big\vert \: \FFF^t \Big]
	  &\le
    \|\theta^{t} - \theta^\star \|_W^2
	    + (\alpha^t)^2 C
	  \nonumber
	  \\
	  & \hspace{0.9cm}
	    - 2 \alpha^t \big( \varphi(\theta^t) - \varphi^\star \big).
	\label{eq:basic_ineq_exp}
	\end{align}
	Inequality~\eqref{eq:basic_ineq_exp} satisfies the assumptions
	of~\cite[Proposition~8.2.10]{bertsekas2003convex}. Thus,
	by following the same arguments as in~\cite[Proposition 8.2.13]{bertsekas2003convex},
	we conclude that, almost surely,
	\begin{align*}
	  \lim_{t \to \infty} \varphi(\theta^t) = \varphi^\star.
	  \eqoprocend
	\end{align*}
\end{proof}

\begin{remark}
  By employing a different probabilistic model and by slightly adapting the
  previous proof, almost sure cost convergence can also be proved for a block
  subgradient method with single block update, thus complementing,
  e.g., the results in~\cite{dang2015stochastic}.
  \oprocend
\end{remark}

\section{Analysis of \algacronym/}
\label{sec:analysis}

In this section, we provide the analysis of \algacronym/. To this end, we first
reformulate problem~\eqref{eq:primal_decomp_master} by properly exploiting
the graph structure. This reformulation is then used to show that our distributed
algorithm is equivalent to a (centralized) randomized block subgradient method.
We finally rely on the results of Section~\ref{sec:block_subgrad_method} to prove
Theorem~\ref{thm:convergence}.

\subsection{Encoding the Coupling Constraints in Cost Function}
\label{sec:pb_reformulation}
As already mentioned in Section~\ref{sec:relaxation_primal_decomp},
a solution of problem~\eqref{eq:problem_original} can be indirectly obtained
by solving problem~\eqref{eq:primal_decomp_master}.
In order to put problem~\eqref{eq:primal_decomp_master} into a form that is
more convenient for distributed computation, let us apply a graph-induced
change of variables.
Such a manipulation has a twofold benefit: \emph{(i)} it allows for the
suppression and implicit satisfaction of the constraint $\sum_{i=1}^N \by_i = \0$,
\emph{(ii)} it allows for the application of the randomized block subgradient
method to take into account the random activation of edges.

Consider the underlying communication graph $\GG_u$.
Assuming an ordering of the edges, let
$\inc \in \real^{|\EE_u| \times N}$ denote the incidence matrix of $\GG_u$,
where each row (corresponding to an edge in the graph) contains all zero entries
except for the column corresponding to the edge tail (equal to $1$), and for the
column corresponding to the edge head (equal to $-1$).
Namely, if the $k$-th row of $\inc$ corresponds to the edge $(i,j)$, then
the $(k,\ell)$-th entry of $\inc$ is
\begin{align*}
  ( \inc )_{k \ell} =
  \begin{cases}
    1 & \text{if } \ell = i,
    \\
    -1 \hspace{0.2cm} & \text{if } \ell = j,
    \\
    0 & \text{otherwise},
  \end{cases}
\end{align*}
for all $\ell \in \until{N}$.
For all $(i,j) \in \EE_u$, let $\bz_{(ij)} \in \real^{S}$ be a vector associated
to the edge $(i,j)$ and denote by $\bz \in \real^{S |\EE_u|}$ the vector stacking
all $\bz_{(ij)}$, with the same ordering as in $\inc$.
Consider the change of variables for problem~\eqref{eq:primal_decomp_master}
defined through the following linear mapping
\begin{align}
  \by &= \cvmat \bz,
  \hspace{1cm}
  \bz \in \real^{S |\EE_u|},
\label{eq:change_of_variable}
\end{align}
where the matrix $\cvmat$ is defined as
\begin{align}
  \cvmat \triangleq (\inc^\top \kron I_S)  \in \real^{SN \times S |\EE_u|}.
\label{eq:change_variable_matrix}
\end{align}
By using the properties of the Kronecker product, the blocks of $\by$
can be written as
\begin{align*}
  \by_i
  = [ \cvmat \bz ]_i
  = \sum_{j \in \nbrs_{i,u}} \! (\bz_{(ij)} - \bz_{(ji)}),
  \hspace{0.5cm}
  \forall \: i \in \until{N}.
\end{align*}
The next lemma formalizes the fact that the change of
variable~\eqref{eq:change_of_variable} implicitly encodes the
constraint $\sum_{i=1}^N \by_i = \0$.
\begin{lemma}
\label{lemma:properties_change_of_variable}
  The matrix $\cvmat$ in~\eqref{eq:change_variable_matrix} satisfies:
  \begin{itemize}
    \item[(i)] $\sum_{i=1}^N [\cvmat \bz]_i = \0$ for all $\bz \in \real^{S|\EE_u|}$;
    
    \item[(ii)] for all $\tilde{\by} \in \real^{SN}$ satisfying $\sum_{i=1}^N \tilde{\by}_i = \0$
      there exists $\tilde{\bz} \in \real^{S|\EE_u|}$ such that $\tilde{\by} = \cvmat \tilde{\bz}$.
  \end{itemize}
\end{lemma}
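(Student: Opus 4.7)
The plan is to prove both items by exploiting standard properties of the incidence matrix $\Gamma$ together with the structure of the Kronecker product. The key facts I will use are that each row of $\Gamma$ has exactly one $+1$ and one $-1$, so $\Gamma \1_N = \0$ (equivalently $\1_N^\top \Gamma^\top = \0$), and that since $\GG_u$ is connected, $\rank(\Gamma) = N-1$ and therefore $\Im(\Gamma^\top) = \{v \in \real^N : \1_N^\top v = 0\}$.

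For item (i), I would simply compute $\sum_{i=1}^N [\cvmat \bz]_i = (\1_N^\top \kron I_S)(\Gamma^\top \kron I_S) \bz$ and use the mixed-product property of the Kronecker product to rewrite this as $(\1_N^\top \Gamma^\top) \kron I_S \cdot \bz = \0$. Alternatively, a direct verification using the expression $[\cvmat \bz]_i = \sum_{j \in \nbrs_{i,u}}(\bz_{(ij)} - \bz_{(ji)})$ works by observing that each term $\bz_{(ij)}$ appears exactly once with sign $+1$ in the sum for agent $i$ and exactly once with sign $-1$ in the sum for agent $j$, so all contributions cancel pairwise.

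For item (ii), the main idea is that the equation $\tilde{\by} = \cvmat \tilde{\bz}$ decouples into $S$ scalar-valued equations thanks to the Kronecker structure. Specifically, writing $\tilde{\by}^{(s)} \in \real^N$ for the vector collecting the $s$-th component of each block $\tilde{\by}_i$, and analogously $\tilde{\bz}^{(s)} \in \real^{|\EE_u|}$, the relation $\tilde{\by} = (\Gamma^\top \kron I_S)\tilde{\bz}$ is equivalent to $\tilde{\by}^{(s)} = \Gamma^\top \tilde{\bz}^{(s)}$ for every $s \in \until{S}$. Each of these $S$ linear systems admits a solution $\tilde{\bz}^{(s)}$ precisely when $\tilde{\by}^{(s)} \in \Im(\Gamma^\top) = \1_N^\perp$, i.e., when $\sum_{i=1}^N [\tilde{\by}_i]_s = 0$. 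The assumption $\sum_{i=1}^N \tilde{\by}_i = \0$ yields exactly these $S$ scalar conditions, so a solution $\tilde{\bz}$ exists by stacking the coordinate-wise solutions.

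The only delicate step is the identification $\Im(\Gamma^\top) = \1_N^\perp$, which requires the connectivity of $\GG_u$ (to conclude $\ker(\Gamma) = \mathrm{span}(\1_N)$ and hence $\rank(\Gamma)=N-1$); this is precisely where the standing assumption on the underlying graph enters. The rest is a routine manipulation of the Kronecker structure, so I do not expect any real obstacle.
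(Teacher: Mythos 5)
Your proof is correct. Item \emph{(i)} is essentially identical to the paper's argument: the same mixed-product manipulation of the Kronecker product combined with $\inc \1 = \0$ (your pairwise-cancellation alternative is also fine and is implicit in the paper's earlier block-wise formula for $[\cvmat\bz]_i$).

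For item \emph{(ii)} you take a genuinely different, and arguably cleaner, route. The paper works directly in the full $SN$-dimensional space: it computes $\rank(\cvmat^\top) = (N-1)S$ from connectivity, deduces $\dim\Ker(\cvmat^\top) = S$ by rank--nullity, exhibits the $S$ columns of $\1 \kron I_S$ as a basis of $\Ker(\cvmat^\top)$ (using item \emph{(i)}), and then shows $\tilde{\by} \perp \Ker(\cvmat^\top)$, hence $\tilde{\by} \in \Im(\cvmat)$. You instead exploit the Kronecker structure to decouple $\tilde{\by} = (\inc^\top \kron I_S)\tilde{\bz}$ into $S$ independent scalar systems $\tilde{\by}^{(s)} = \inc^\top \tilde{\bz}^{(s)}$ and invoke the standard fact $\Im(\inc^\top) = \Ker(\inc)^\perp = \1^\perp$ for a connected graph. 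Both arguments hinge on the same two ingredients --- the orthogonality characterization of the image and $\rank(\inc) = N-1$ from connectivity --- but your coordinate-wise reduction avoids the explicit dimension count and basis identification for $\Ker(\cvmat^\top)$, at the price of having to justify (briefly) that the Kronecker system really does decouple component-wise; the paper's version keeps everything in one linear-algebraic computation on the lifted matrix. Either is acceptable; no gap.
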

\begin{proof}
  To prove \emph{(i)}, we see that
  \begin{align*}
    \sum_{i=1}^N [\cvmat \bz]_i
    &=
    (\1^\top \kron I_S) \cvmat \bz
    \\
    &= (\1^\top  \kron I_S) (\inc^\top \kron I_S) \bz
    \\
    &= \big( (\inc \kron I_S) (\1 \kron I_S) \big)^\top \bz
    \\
    &\stackrel{(a)}{=} \big( (\inc \1) \kron I_S \big)^\top \bz
    \\
    &\stackrel{(b)}{=} \big( \0 \kron I_S \big)^\top \bz = \0,
  \end{align*}
  where in $(a)$ we used the fact $(A \kron B)(C \kron D) = (AC) \kron (BD)$
  since the matrix dimensions are compatible,
  and $(b)$ follows by the property $\inc \1 = \0$ of incidence matrices.
  
  To prove \emph{(ii)}, let $\tilde{\by} \in \real^{SN}$ be such that
  $\sum_{i=1}^N \tilde{\by_i} = \0$, or, equivalently, $(\1^\top \kron I_S) \tilde{\by} = \0$.
  Let us first show that $\bv^\top \tilde{\by} = 0$ for all $\bv \in \Ker (\cvmat^\top)$.
  To this end, take $\bv \in \Ker (\cvmat^\top)$.
  Since $\GG_u$ is connected, then $\rank(\inc) = N-1$.
  Thus, by the properties of the Kronecker product, it holds
  \begin{align*}
    \rank(\cvmat^\top)
    &=
    \rank(\inc \kron I_S)
    \\
    &=
    \rank(\inc) \rank(I_S)
    \\
    &= (N-1)S.
  \end{align*}
  Moreover, by the Rank-Nullity Theorem, it holds
  \begin{align*}
    \dim \Ker(\cvmat^\top)
    &=
    SN - \rank(\cvmat^\top) = S.
  \end{align*}
  But since the columns of $(\1 \kron I_S) \in \real^{SN \times S}$ are linearly
  independent, and since the point \emph{(i)} of the lemma implies that they
  belong to $\Ker(\Pi^\top)$, it follows that they are actually a basis of
  $\Ker(\Pi^\top)$, so that the vector $\bv$ can be written
  as $\bv = (\1 \kron I_S) \blambda$, for some $\blambda \in \real^S$.
  Therefore, it holds
  \begin{align*}
    \bv^\top \tilde{\by}
    = \blambda^\top \underbrace{ (\1^\top \kron I_S) \tilde{\by} }_{= \: \0}
    = 0.
  \end{align*}
  Thus, since $\bv$ is arbitrary, it follows that $\bv^\top \tilde{\by} = 0$
  for all $\bv \in \Ker (\cvmat^\top)$. By definition of orthogonal complement,
  this means that $\tilde{\by} \in \Ker (\cvmat^\top)^{\bot} = \Im(\cvmat)$.
  Equivalently, there exists $\tilde{\bz}$ such that
  $\tilde{\by} = \cvmat \tilde{\bz}$. The proof follows since $\tilde{\by}$
  is arbitrary.
\end{proof}

We now plug the change of variable~\eqref{eq:change_of_variable} into
problem~\eqref{eq:primal_decomp_master}. Formally, for all $i \in \until{N}$,
define the functions
\begin{align*}
  \tp_i \big( \{\bz_{(ij)}, \bz_{(ji)}\}_{j \in \nbrs_{i,u}} \big)
  \triangleq
  p_i\big( [\Pi \bz]_i \big),
  \hspace{0.7cm}
  \bz \in \real^{S|\EE_u|}.
\end{align*}
By Lemma~\ref{lemma:properties_change_of_variable}, we directly
obtain the following result.
\begin{corollary}
\label{corollary:equivalence_master_prob_z}
  Problem~\eqref{eq:primal_decomp_master} is equivalent to the
  unconstrained optimization problem
  \begin{align}
	  \min_{\bz \in \real{S|\EE_u|}} \:
	    \sum_{i=1}^N
	    \tp_i \big( \{\bz_{(ij)}, \bz_{(ji)}\}_{j \in \nbrs_{i,u}} \big),
	\label{eq:problem_z}
	\end{align}
	in the sense that \emph{(i)} the optimal costs are equal, and
  \emph{(ii)} if $\bz^\star$ is an optimal solution
  of~\eqref{eq:problem_z}, then $\by^\star = \cvmat \bz^\star$ is an
  optimal solution of~\eqref{eq:primal_decomp_master}.
	\oprocend
\end{corollary}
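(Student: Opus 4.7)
The plan is to leverage Lemma~\ref{lemma:properties_change_of_variable} to set up a cost-preserving correspondence between feasible points of the master problem~\eqref{eq:primal_decomp_master} and arbitrary points of the unconstrained problem~\eqref{eq:problem_z}. Since Lemma~\ref{lemma:properties_change_of_variable} already supplies the two set-level inclusions needed, the proof essentially amounts to combining them with the definitional identity $\tp_i(\{\bz_{(ij)}, \bz_{(ji)}\}_{j\in\nbrs_{i,u}}) = p_i([\cvmat\bz]_i)$. No substantial obstacle is anticipated; the main care is to make both inequalities between the optimal costs explicit.

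First, I would observe the cost-matching identity: by definition of $\tp_i$, the objective of~\eqref{eq:problem_z} evaluated at any $\bz \in \real^{S|\EE_u|}$ equals $\sum_{i=1}^N p_i([\cvmat \bz]_i)$, which is precisely the objective of~\eqref{eq:primal_decomp_master} evaluated at $\by = \cvmat \bz$. Note that $p_i(\cdot)$ is well-defined (finite) on all of $\real^S$ since subproblem~\eqref{eq:primal_decomp_subprob} is feasible for every $\by_i$ by the presence of the relaxation variable $\rho_i$ and the compactness of $X_i$. Then, invoking part \emph{(i)} of Lemma~\ref{lemma:properties_change_of_variable}, every $\bz$ gives rise to a $\by = \cvmat \bz$ that satisfies $\sum_{i=1}^N \by_i = \0$, i.e., feasible for~\eqref{eq:primal_decomp_master}; this yields the inequality (optimal cost of~\eqref{eq:primal_decomp_master}) $\le$ (optimal cost of~\eqref{eq:problem_z}).

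Next, I would use part \emph{(ii)} of Lemma~\ref{lemma:properties_change_of_variable} for the reverse direction: any feasible $\by$ for~\eqref{eq:primal_decomp_master} (i.e., any $\by$ with $\sum_i \by_i = \0$) can be written as $\cvmat \bz$ for some $\bz$. By the cost-matching identity, the cost of~\eqref{eq:primal_decomp_master} at $\by$ equals the cost of~\eqref{eq:problem_z} at that $\bz$, giving (optimal cost of~\eqref{eq:problem_z}) $\le$ (optimal cost of~\eqref{eq:primal_decomp_master}). Combining both inequalities establishes claim \emph{(i)}. Finally, for claim \emph{(ii)}, if $\bz^\star$ is optimal for~\eqref{eq:problem_z}, then $\by^\star = \cvmat \bz^\star$ is feasible for~\eqref{eq:primal_decomp_master} by Lemma~\ref{lemma:properties_change_of_variable}\emph{(i)} and attains its optimal cost by the cost-matching identity together with claim \emph{(i)}, hence is optimal for~\eqref{eq:primal_decomp_master}.
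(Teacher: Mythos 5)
Your proof is correct and follows exactly the route the paper intends: the paper states the corollary as following ``directly'' from Lemma~\ref{lemma:properties_change_of_variable}, and your argument simply makes explicit the two cost inequalities obtained from parts \emph{(i)} and \emph{(ii)} of that lemma together with the definitional identity $\tp_i(\{\bz_{(ij)},\bz_{(ji)}\}_{j\in\nbrs_{i,u}}) = p_i([\cvmat\bz]_i)$. No discrepancy with the paper's reasoning.
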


In the following, we denote the cost function of~\eqref{eq:problem_z}
as $\tp(\bz) = \sum_{i=1}^N \tp_i \big( \{\bz_{(ij)}, \bz_{(ji)}\}_{j \in \nbrs_{i,u}} \big) = p(\cvmat \bz)$.

\subsection{Equivalence of \algacronym/ and Randomized Block Subgradient}
\label{sec:block_subgrad_method_equiv}

Differently from problem~\eqref{eq:primal_decomp_master},
its equivalent formulation~\eqref{eq:problem_z} is unconstrained.
Hence, it can be solved via subgradient methods without projections steps.
It is possible to exploit the particular structure of problem~\eqref{eq:problem_z}
to recast the random activation of edges as the random update of blocks
within a block subgradient method~\eqref{eq:block_algorithm} applied
to problem~\eqref{eq:problem_z}.
We will use the following identifications,
\begin{align}
  \theta
    = \bz,
  \hspace{0.5cm}
  \text{and}
  \hspace{0.5cm}
  \varphi(\theta)
    = \sum_{i=1}^N \tp_i \big( \{\bz_{(ij)}, \bz_{(ji)}\}_{j \in \nbrs_{i,u}} \big).
\label{eq:identification_z_phi}
\end{align}
As for the block structure, the mapping is as follows.
Each block $\ell \in \until{B}$ of $\bz$, i.e., $\bz_\ell \in \real^{2S}$,
is associated to an undirected edge $(i,j) \in \EE_u$, with $j > i$,
and is defined as
\begin{align}
  \bz_\ell
  =
  \begin{bmatrix}
    \bz_{(ij)}
    \\
    \bz_{(ji)}
  \end{bmatrix}.
\label{eq:identification_blocks}
\end{align}
Therefore, there is a total of $B = |\EE_u|/2$ blocks.
At each iteration $t$, each block $\bz_\ell$ is updated if the corresponding
edge $(i,j) \in \EE^t$, i.e., if $\rvedge_{ij}^t = 1$.
A pictorial representation of the block structure of $\bz$ is provided in
Figure~\ref{fig:blocks}.
\begin{figure}[!htpb]
\centering
  \includegraphics[scale=1]{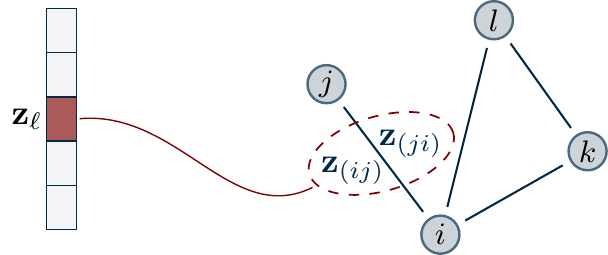}
  \caption{
    Block structure of the variable $\bz$. Each block, say $\ell$, is
    associated to an undirected edge, say $(i,j)$.
    The block is the stack of $\bz_{(ij)}$, associated to the edge $(i,j)$,
    and $\bz_{(ji)}$ associated to the edge $(j,i)$.
  }
\label{fig:blocks}
\end{figure}

\noindent
Consistently with the notation of Section~\ref{sec:block_subgrad_method},
we use the shorthands $\prob_\ell = \prob_{ij}$ and
$\rvedge_\ell^t = \rvedge_{ij}^t$.
At each iteration $t$ of algorithm~\eqref{eq:block_algorithm}, the set
$B^t$ contains all and only the blocks associated
to the edges in $\EE^t$.

Next, we explicitly write the evolution of the sequences
generated by \algacronym/ as a function of the sequences generated by
the block subgradient method~\eqref{eq:block_algorithm}.
For this purpose, let us write a subgradient of $\tp$ at any
$\bz \in \real^{S|\EE_u|}$. By definition, it holds $\tp(\bz) = p(\cvmat \bz)$.
Thus, by using the subgradient property for affine transformations of the
domain\footnote{This property of subgradients is the counterpart of the chain rule
for differentiable functions.}, it holds
\begin{align}
  \subgrad \tp(\bz)
  = (\inc \kron I_S) \subgrad p(\cvmat \bz).
\label{eq:subgrad_p_tilde}
\end{align}
By exploiting the structure of $p$,
the $i$-th block of $\subgrad p(\by)$ is equal to
$\frac{\tilde{\partial} p(\by)}{\partial \by_i} = \subgrad p_i(\by_i)$.
Moreover, since problem~\eqref{eq:primal_decomp_subprob} enjoys strong duality,
a subgradient of $p_i$ at $\by_i$ can be computed as $\subgrad p_i(\by_i) = -\bmu_i$,
where $\bmu_i$ is an optimal Lagrange multiplier of problem~\eqref{eq:primal_decomp_subprob}
(cf.~\cite[Section 5.4.4]{bertsekas1999nonlinear}).
By collecting these facts together with~\eqref{eq:subgrad_p_tilde}, it
follows that the blocks of $\subgrad \tp(\bz)$ can be computed as
\begin{align}
  \frac{\tilde{\partial} \tp(\bz)}{\partial \bz_{(ij)}}
  &=
  \subgrad p_i\Big( [ \cvmat \bz ]_i \Big)
  - \subgrad p_j\Big( [ \cvmat \bz ]_j \Big)
  \nonumber
  \\
  &=
  \bmu_j - \bmu_i,
  \hspace{2cm}
  \forall \: (i,j) \in \EE_u,
\label{eq:subgrad_p_tilde_ij}
\end{align}
where $\frac{\tilde{\partial} \tp(\bz)}{\partial \bz_{(ij)}}$ denotes the
block of $\subgrad \tp(\bz)$ associated to $\bz_{(ij)}$ and,
for all $k \in \until{N}$, $\bmu_k$ denotes an optimal Lagrange
multiplier for the problem
\begin{align}
\label{eq:subgrad_local_prob}
\begin{split}
  \min_{\bx_k, \rho_k} \: & \: f_k(\bx_k) + M \rho_k
  \\
  \subj \: & \: \bg_k(\bx_k) \le [ \cvmat \bz ]_k + \rho_k \1
  \\
  & \: \rho_k \ge 0, \:\: \bx_k \in X_k.
\end{split}
\end{align}
Combining~\eqref{eq:identification_z_phi},~\eqref{eq:identification_blocks}
and~\eqref{eq:subgrad_p_tilde_ij}, the update~\eqref{eq:block_algorithm}
can be recast as
\begin{align}
  \bz_{(ij)}^{t+1}
  =
  \begin{cases}
    \bz_{(ij)}^t + \alpha^t \big( \bmu_i^t - \bmu_j^t \big),
    \hspace{0.5cm}
    & \text{if } (i,j) \in \EE^t,
  \\
    \bz_{(ij)}^t,
    & \text{if } (i,j) \notin \EE^t,
  \end{cases}
\label{eq:alg_z_ij}
\end{align}
where $\bmu_k^t$ denotes an optimal Lagrange multiplier
of~\eqref{eq:subgrad_local_prob} with $\bz = \bz^t$ (with a slight
abuse of notation\footnote{
Indeed, the symbol $\bmu_i^t$ was already defined in
Section~\ref{sec:algorithm_description} in the \algacronym/ table.
In fact, as per the equivalence of the two algorithms (which is being shown here),
the two quantities coincide.}).
Thus,
\begin{align}
  [ \cvmat \bz^{t+1} ]_i
  &=
  \sum_{j \in \nbrs_{i,u}} \! (\bz_{(ij)}^{t+1} - \bz_{(ji)}^{t+1})
  \nonumber
  \\
  &\stackrel{(a)}{=}
  \underbrace{
    \sum_{j \in \nbrs_{i,u}} \! (\bz_{(ij)}^{t} - \bz_{(ji)}^{t})
  }_{\by_i^t}
  +
  2 \alpha^t \sum_{j \in \nbrs_i^t} \big( \bmu_i^t - \bmu_j^t \big)
  \nonumber
  \\
  &= \by_i^{t+1},
  \hspace{2cm}
  i \in \until{N},
  \label{eq:dpd_block_alg_equivalence}
\end{align}
where $(a)$ follows by~\eqref{eq:alg_z_ij}. Therefore the
\algacronym/ algorithm and the block subgradient
method~\eqref{eq:block_algorithm} are equivalent
(up to a factor $2$ in front of the step-size $\alpha^t$, which can
be embedded in its definition).

Before going on, let us state the following technical result.
\begin{lemma}
\label{lemma:bounded_subgradients}
  For all $\bz \in \real^{S|\EE_u|}$, the subgradients of $\tp$ at $\bz$
  are block-wise bounded, i.e.,
  \begin{align*}
    \| [ \subgrad \tp(\bz) ]_\ell \| \le C_\ell,
    \hspace{0.5cm}
    \forall \: \ell \in \until{B}, \forall \: \bz \in \real^{S|\EE|}.
  \end{align*}
  where each $C_\ell > 0$ is a sufficiently large constant 
  proportional to $M$.
\end{lemma}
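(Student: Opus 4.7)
The plan is to exploit the explicit characterization of the subgradient of $\tp$ already derived in~\eqref{eq:subgrad_p_tilde_ij}: the block of $\subgrad \tp(\bz)$ associated to the edge $(i,j)$ is $\bmu_j - \bmu_i$, where $\bmu_k$ is an optimal Lagrange multiplier of the local subproblem~\eqref{eq:subgrad_local_prob} evaluated at the current allocation $[\cvmat \bz]_k$. Since, by~\eqref{eq:identification_blocks}, each block $\bz_\ell$ stacks $\bz_{(ij)}$ and $\bz_{(ji)}$ for some undirected $(i,j)\in\EE_u$, the block $[\subgrad \tp(\bz)]_\ell$ is (up to signs) the stacking of two copies of $\bmu_j - \bmu_i$. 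Hence the lemma reduces to a uniform-in-$\bz$ bound on the norm of any optimal Lagrange multiplier of~\eqref{eq:subgrad_local_prob}.

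First, I would argue that strong duality holds for~\eqref{eq:subgrad_local_prob} for every right-hand side $[\cvmat \bz]_k \in \real^S$. Picking any $\bar{\bx}_k \in X_k$ (nonempty by Assumption~\ref{ass:problem}) and any sufficiently large $\rho_k > 0$ makes all nonlinear constraints strictly feasible, so Slater's condition is automatically verified at every allocation thanks to the slack variable $\rho_k$. Combined with compactness of $X_k$ and continuity of $f_k$ and $\bg_k$, this yields a finite primal optimum together with existence of an optimal multiplier $\bmu_k \ge \0$.

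The core step is then a one-line inspection of the dual function. Evaluating the Lagrangian of~\eqref{eq:subgrad_local_prob} at any candidate $\bmu_k \ge \0$ and minimizing over $\rho_k \ge 0$ leaves the term $\min_{\rho_k \ge 0}(M - \|\bmu_k\|_1)\rho_k$ (since $\bmu_k \ge \0$ implies $\bmu_k^\top \1 = \|\bmu_k\|_1$), which equals $0$ if $\|\bmu_k\|_1 \le M$ and $-\infty$ otherwise. Since the primal optimum is finite, by strong duality so is the dual optimum; therefore any optimal $\bmu_k$ must satisfy $\|\bmu_k\|_1 \le M$, and a fortiori $\|\bmu_k\| \le M$.

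Collecting the pieces, $\|[\subgrad \tp(\bz)]_\ell\| \le \sqrt{2}\,\|\bmu_j - \bmu_i\| \le \sqrt{2}\,(\|\bmu_i\| + \|\bmu_j\|) \le 2\sqrt{2}\,M$, a quantity independent of $\bz$, so one can set $C_\ell = 2\sqrt{2}\,M$. The only delicate point I foresee is ensuring that the multiplier bound is genuinely uniform over the admissible right-hand sides $[\cvmat \bz]_k$; this is precisely what the slack variable $\rho_k$ secures, by making Slater's condition (and hence the dual characterization above) hold for every $\bz \in \real^{S|\EE_u|}$.
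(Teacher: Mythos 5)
Your proof is correct and follows essentially the same route as the paper: reduce the claim to the uniform bound $\|\bmu_k\|_1 \le M$ on the optimal multipliers of~\eqref{eq:subgrad_local_prob} and then bound the stacked block $(\bmu_j-\bmu_i,\,\bmu_i-\bmu_j)$ via the triangle inequality. The only difference is that the paper simply cites~\cite[Section III-B]{notarnicola2017constraint} for the multiplier bound, whereas you derive it directly from the $(M-\|\bmu_k\|_1)\rho_k$ term in the Lagrangian together with strong duality (guaranteed for every $\bz$ by the slack variable $\rho_k$) --- a more self-contained version of the same argument.
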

\begin{proof}
  Fix a block $\ell$ and suppose that it is associated to the edge $(i,j)$.
  According to the previous discussion,
  the $\ell$-th block of $\subgrad \tp(\bz)$ is equal to
  \begin{align*}
    [ \subgrad \tp(\bz) ]_\ell
    =
    \begin{bmatrix}
      \bmu_j - \bmu_i
      \\
      \bmu_i - \bmu_j
    \end{bmatrix},
  \end{align*}
  where each $\bmu_k$ is a Lagrange multiplier of problem~\eqref{eq:subgrad_local_prob}.
  As shown in~\cite[Section III-B]{notarnicola2017constraint}, it holds
  $\|\bmu_k\|_1 \le M$ for all $k \in \until{N}$.
  Thus, the proof follows by using the equivalence of norms and by choosing
  a sufficiently large $C_\ell > 0$.
\end{proof}

\subsection{Proof of Theorem~\ref{thm:convergence}}
\label{sec:proof_theorem_convergence}
The arguments used here rely on the convergence of
the randomized block subgradient method~\eqref{eq:block_algorithm}
and on the algorithm equivalence discussed in
Section~\ref{sec:block_subgrad_method_equiv}.

To prove \emph{(i)}, let us consider the block subgradient method~\eqref{eq:block_algorithm}
applied to problem~\eqref{eq:problem_z}. Note that the function $\tp(\bz)$
is convex (because the functions $p_i$ are convex, cf.
\cite[Section 5.4.4]{bertsekas1999nonlinear}) and its optimal cost is
equal to $f^\star$, the optimal cost of~\eqref{eq:problem_original}
(cf. Corollary~\ref{corollary:equivalence_master_prob_z},
Lemma~\ref{lemma:primal_decomp_equivalence} and Lemma~\ref{lemma:relaxation}).
By Lemma~\ref{lemma:bounded_subgradients} and by the theorem's
assumptions, we can apply Theorem~\ref{thm:block_subgrad_method}
to conclude that, almost surely,
\begin{align*}
  f^\star
  &=
  \lim_{t \to \infty}
    \sum_{i=1}^N \tp_i \big( \{ \bz_{(ij)}^t, \bz_{(ji)}^t \}_{j \in \nbrs_{i,u}} \big)
  \\
  &\stackrel{(a)}{=}
  \lim_{t \to \infty} \sum_{i=1}^N p_i(\by_i^t)
  \\
  &\stackrel{(b)}{=}
  \lim_{t \to \infty} \sum_{i=1}^N \big( f_i(\bx_i^t) + M \rho_i^t \big),
\end{align*}
where $(a)$ follows by definition of $\tp_i$ and
by~\eqref{eq:dpd_block_alg_equivalence} and $(b)$ follows by
construction of $(\bx_i^t, \rho_i^t)$.

To prove \emph{(ii)}, it is possible to follow the same line of proof
of~\cite{notarnicola2017constraint}. However, as here we are considering
a probabilistic setting in a primal decomposition framework, we report
the proof for completeness.
Let us consider the sample set $\bar{\Omega}$ for which point
\emph{(i)} of the theorem holds, and pick any sample path $\omega \in \bar{\Omega}$.
Consider the primal sequence
$\{(\bx_1^t, \ldots, \bx_N^t, \rho_1^t, \ldots, \rho_N^t)\}_{t \ge 0}$ generated
by the \algacronym/ algorithm corresponding to $\omega$.
By summing over $i \in \until{N}$ the inequality
$\bg_i(\bx_i^t) \le \by_i^t + \rho_i^t \1$
(which holds by construction), it holds
\begin{align}
  \sum_{i=1}^N \bg_i(\bx_i^t)
  \le
  \sum_{i=1}^N \by_i^t + \sum_{i=1}^N \rho_i^t \1
  =
  \sum_{i=1}^N \rho_i^t \1.
\label{eq:proof_primal_recovery_t}
\end{align}
Define $\rho^t = \sum_{i=1}^N \rho_i^t$.
By construction, the sequence $\{(\bx_1^t, \ldots, \bx_N^t, \rho^t)\}_{t \ge 0}$ is bounded (as a consequence of point \emph{(i)} and continuity of the functions $f_i(\bx_i)+ M\rho_i$), so that
there exists a sub-sequence of indices $\{t_h\}_{h \ge 0} \subseteq \{t\}_{t \ge 0}$
such that the sequence $\{(\bx_1^{t_n}, \ldots, \bx_N^{t_h}, \rho^{t_h})\}_{h \ge 0}$
converges. Denote the limit point of such sequence as
$(\bar{\bx}_1, \ldots, \bar{\bx}_N, \bar{\rho})$.
From point \emph{(i)} of the theorem, it follows that
\begin{align*} %
  \sum_{i=1}^N f_i(\bar{\bx}_i) + M \bar{\rho} = f^\star.
\end{align*}
By Lemma~\ref{lemma:relaxation}, it must hold $\bar{\rho} = 0$.
As the functions $\bg_i$ are continuous, by taking the limit
in~\eqref{eq:proof_primal_recovery_t} as $h \to \infty$, with $t = t_h$, it holds
\begin{align*} %
  \sum_{i=1}^N \bg_i(\bar{\bx_i}) \le \bar{\rho} \1 = \0.
\end{align*}
Therefore, the point $(\bar{\bx}_1, \ldots, \bar{\bx}_N)$ is an optimal solution
of problem~\eqref{eq:problem_original}.
Since the sample path $\omega \in \bar{\Omega}$ is arbitrary,
every limit point of $\{(\bx_1^t, \ldots, \bx_N^t)\}_{t \ge 0}$
is feasible and cost-optimal for problem~\eqref{eq:problem_original},
almost surely.
\oprocend

\GN{
\subsection{Comparison with Existing Works}
\label{sec:comparison_existing_works}

In this subsection, we are in the position to properly highlight how our
algorithm differs from other works proposed in the literature.
In the special case of static graphs, the algorithm proposed in this
paper can be shown, with an appropriate
change of variables, to have the same evolution of the algorithm
proposed in \cite{notarnicola2017constraint}).
However, several differences are present and are listed hereafter.
First, note that \algacronym/ requires only one communication step
per iteration and $S$ local states, whereas
the algorithm in \cite{notarnicola2017constraint} requires
two communication steps per iteration and has a storage demand
of $2S|\nbrs_i|$ local states.
Moreover, the analysis in \cite{notarnicola2017constraint} relies
on a dual decomposition-based technique which necessarily
freezes the graph topology in the problem formulation and does
not allow for time-varying networks. Instead, in this paper
we consider a primal decomposition approach that allows
us to deal with random, time-varying graphs.

As regards other algorithms working on time-varying networks, 
one can apply a distributed subgradient method to the dual 
of problem~\eqref{eq:problem_original}. In this approach, an averaging 
mechanism to obtain a feasible primal solution is also necessary.
The primal decomposition rationale behind \algacronym/ allows us to
avoid this procedure and obtain a faster convergence rate as shown through 
extensive simulations in Section~\ref{sec:simulation_comparison_dual_subg}.

}

\section{Convergence Rates and Further Discussion}
\label{sec:rates_and_discussion}

In this section, we provide convergence rates of \algacronym/
and a discussion on the parameter $M$.

\subsection{Convergence Rates}
The \algacronym/ algorithm enjoys a sublinear rate for both constant
and diminishing step-size rules. For constant step-size, the cost sequence
converges as $O(1/t)$, while for diminishing step-size, the rate is
$O(1/\log(t))$.
The results provided here are expressed in terms of the quantity
\begin{align*}
  f_\best^t \triangleq \min_{\tau \le t} \sum_{i=1}^N \expv[ f_i(\bx_i^\tau) + M \rho_i^\tau ],
\end{align*}
where the expression in the expected value is the optimal cost of
problem~\eqref{eq:alg_local_prob} for agent $i$ at time $\tau$.
Intuitively, this value represents the best cost value obtained by the algorithm
up to a certain iteration $t$, in an expected sense.

The following analysis is based on deriving convergence rates for our generalized
block subgradient method and thus also complements the ones in, e.g.,
\cite{dang2015stochastic}.
In the next lemma we derive a basic inequality.
\begin{lemma}
  Let Assumptions~\ref{ass:problem},~\ref{ass:slater} and~\ref{ass:iid_var} hold.
  Then, for all $t \ge 0$ it holds
  \begin{align}
	  2 \Bigg( \! \sum_{\tau=0}^t \alpha^\tau \! \Bigg) ( f_\best^t - f^\star )
	  \le 
	  \|\bz^0 - \bz^\star \|_W^2
	    + C \sum_{\tau=0}^t (\alpha^\tau)^2.
	 \label{eq:basic_ineq_convergence_rate}
	\end{align}	 
\end{lemma}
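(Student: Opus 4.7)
The plan is to start from the conditional-expectation recursion already derived in the proof of Theorem~\ref{thm:block_subgrad_method} and specialize it to the identifications made in Section~\ref{sec:block_subgrad_method_equiv}. With $\theta = \bz$ and $\varphi(\bz) = \tp(\bz)$, and picking $\theta^\star = \bz^\star$ an optimal solution of~\eqref{eq:problem_z} (which exists by Corollary~\ref{corollary:equivalence_master_prob_z} combined with Lemma~\ref{lemma:primal_decomp_equivalence} and Lemma~\ref{lemma:relaxation}, so that the optimal cost of $\tp$ equals $f^\star$), inequality~\eqref{eq:basic_ineq_exp} reads, for each $\tau \ge 0$,
\begin{align*}
  \expv\!\Big[ \|\bz^{\tau+1} - \bz^\star\|_W^2 \:\big\vert\: \FFF^\tau \Big]
  \le \|\bz^\tau - \bz^\star\|_W^2 + (\alpha^\tau)^2 C - 2\alpha^\tau \big( \tp(\bz^\tau) - f^\star \big).
\end{align*}

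Next I would take total expectation of both sides (using the tower property) and telescope from $\tau = 0$ to $t$. Dropping the nonnegative term $\expv[\|\bz^{t+1} - \bz^\star\|_W^2]$ on the left, this yields
\begin{align*}
  2 \sum_{\tau=0}^t \alpha^\tau \, \expv\!\big[ \tp(\bz^\tau) - f^\star \big]
  \le \|\bz^0 - \bz^\star\|_W^2 + C \sum_{\tau=0}^t (\alpha^\tau)^2.
\end{align*}
Using the equivalence $\tp(\bz^\tau) = p(\cvmat \bz^\tau) = \sum_{i=1}^N p_i(\by_i^\tau)$ established via~\eqref{eq:dpd_block_alg_equivalence}, and recalling that by construction $p_i(\by_i^\tau) = f_i(\bx_i^\tau) + M\rho_i^\tau$, we have $\expv[\tp(\bz^\tau)] = \sum_{i=1}^N \expv[f_i(\bx_i^\tau) + M\rho_i^\tau]$.

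To conclude, I would bound the weighted sum from below by its weighted minimum: since $\alpha^\tau \ge 0$,
\begin{align*}
  \sum_{\tau=0}^t \alpha^\tau \, \expv\!\big[ \tp(\bz^\tau) - f^\star \big]
  \ge \Bigg( \sum_{\tau=0}^t \alpha^\tau \Bigg) \min_{\tau \le t} \expv\!\big[ \tp(\bz^\tau) - f^\star \big]
  = \Bigg( \sum_{\tau=0}^t \alpha^\tau \Bigg) ( f_\best^t - f^\star ),
\end{align*}
where the last equality uses the definition of $f_\best^t$. Substituting this into the previous inequality gives~\eqref{eq:basic_ineq_convergence_rate}.

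There is no substantive obstacle here; the proof is essentially bookkeeping once one has inequality~\eqref{eq:basic_ineq_exp}. The only mild care required is in two places: \emph{(i)} verifying that the optimal cost of the surrogate problem~\eqref{eq:problem_z} coincides with $f^\star$ (so that the centralized block-subgradient bound can be invoked against $f^\star$ directly), and \emph{(ii)} noting that $f_\best^t$ is defined via the $\min$ of $\expv[\cdot]$, not $\expv[\min \cdot]$, which is exactly what the weighted-average lower bound above produces.
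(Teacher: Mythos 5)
Your proof is correct and follows essentially the same route as the paper's: specialize inequality~\eqref{eq:basic_ineq_exp} to $\bz^t$, $\bz^\star$, take total expectation, telescope, drop the nonnegative distance term, and lower-bound the $\alpha$-weighted sum by $\big(\sum_{\tau\le t}\alpha^\tau\big)(f_\best^t - f^\star)$ using $\tp(\bz^\tau)=\sum_{i=1}^N \big(f_i(\bx_i^\tau)+M\rho_i^\tau\big)$. The two points you flag as requiring care (the optimal cost of~\eqref{eq:problem_z} equaling $f^\star$, and $f_\best^t$ being a $\min$ of expectations) are handled exactly as the paper does.
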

\begin{proof}
  We consider the same line of proof of Theorem~\ref{thm:block_subgrad_method}
  up to~\eqref{eq:basic_ineq_exp}, specialized for $\theta^t = \bz^t$,
  $\theta^\star = \bz^\star$ (an optimal solution of problem~\eqref{eq:problem_z}),
  with corresponding cost $\varphi^\star = \tp(\bz^\star) = f^\star$
  (the optimal cost of problem~\eqref{eq:problem_original}).
	Taking the total expectation (with respect to $\FFF^t$) of~\eqref{eq:basic_ineq_exp},
	it follows that, for all $t \ge 0$,
	\begin{align*}
	  \expv\Big[ \|\bz^{t+1} \!\!- \bz^\star \|_W^2 \Big]
	  &=
	  \expv\Big\{ \expv\Big[ \|\bz^{t+1} \!\!- \bz^\star \|_W^2 \big\vert \FFF^t \Big] \Big\}
	  \\
	  &\le
	    \expv\Big[ \|\bz^{t} - \bz^\star \|_W^2 \Big]
	    + (\alpha^t)^2 C
	  \\
	  & \hspace{0.9cm}
	    - 2 \alpha^t \Big( \expv[ \tp(\bz^t) ] - f^\star \Big).
	\end{align*}
  Applying recursively the previous inequality yields
	\begin{align*}
	  \expv\Big[ \|\bz^{t+1} \!\!- \bz^\star \|_W^2 \Big]
	  &\le
	    \|\bz^0 - \bz^\star \|_W^2
	    + C \sum_{\tau=0}^t (\alpha^\tau)^2
	  \\
	  & \hspace{0.9cm}
	    - 2 \sum_{\tau=0}^t \alpha^\tau \Big( \expv[ \tp(\bz^\tau) ] - f^\star \Big)
	\end{align*}
	for all $t \ge 0$. By using the fact $\|\bz^{t+1} \!\!- \bz^\star \|_W^2 \ge 0$,
	we obtain
	\begin{align*}
	  2 \sum_{\tau=0}^t \alpha^\tau \Big( \expv[ \tp(\bz^\tau) ] - f^\star \Big)
	  &\le
	    \|\bz^0 - \bz^\star \|_W^2
	    + C \sum_{\tau=0}^t (\alpha^\tau)^2,
	\end{align*}
	for all $t \ge 0$. The proof follows by combining the previous inequality with
	$\displaystyle\expv[ \tp(\bz^t) ] \ge \min_{\tau \le t} \expv[ \tp(\bz^\tau) ]$
	and $\tp(\bz^\tau) = p(\by^\tau) = \sum_{i=1}^N p_i(\by_i^\tau)
	= \sum_{i=1}^N f_i(\bx_i^\tau) + M \rho_i^\tau$.
\end{proof}

For constant step-sizes, it is possible to prove a sublinear convergence
rate $O(1/t)$, as formalized next.
\begin{proposition}[Sublinear rate for constant step-size]
  Let the same assumptions of Theorem~\ref{thm:convergence} hold (except
  for Assumption~\ref{ass:stepsize}). Assume $\alpha^t = \alpha > 0$ for all
  $t \ge 0$. Then, it holds
  \begin{align*}
	  f_\best^t - f^\star
	  \le 
	  \frac{\|\bz^0 - \bz^\star \|_W^2}{2 \alpha (t+1)}
	    + \frac{C \alpha}{2}.
  \end{align*}
\label{prop:convergence_rate_constant}
\end{proposition}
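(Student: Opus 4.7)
The plan is to directly substitute the constant step-size $\alpha^t=\alpha$ into the basic inequality~\eqref{eq:basic_ineq_convergence_rate} established in the preceding lemma. Since that lemma already performs the heavy lifting (derivation via the block subgradient recursion, taking conditional expectations, telescoping, and using the equivalence $\tp(\bz^\tau) = \sum_{i=1}^N f_i(\bx_i^\tau) + M\rho_i^\tau$), what remains is essentially an arithmetic specialization.

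First, I would evaluate the two sums appearing in~\eqref{eq:basic_ineq_convergence_rate} under the constant step-size rule: $\sum_{\tau=0}^t \alpha^\tau = (t+1)\alpha$ and $\sum_{\tau=0}^t (\alpha^\tau)^2 = (t+1)\alpha^2$. Plugging these into the basic inequality gives
\begin{align*}
  2(t+1)\alpha \,(f_\best^t - f^\star) \le \|\bz^0 - \bz^\star\|_W^2 + (t+1)\alpha^2 C.
\end{align*}
Dividing both sides by $2(t+1)\alpha$ (which is strictly positive, since $\alpha>0$) yields exactly the claimed bound
\begin{align*}
  f_\best^t - f^\star \le \frac{\|\bz^0 - \bz^\star\|_W^2}{2\alpha(t+1)} + \frac{C\alpha}{2}.
\end{align*}

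There is no real obstacle in this proof, as all the nontrivial work — in particular the use of Lemma~\ref{lemma:bounded_subgradients} to bound the subgradient blocks, of Corollary~\ref{corollary:equivalence_master_prob_z} and Lemma~\ref{lemma:relaxation} to identify the optimal cost of~\eqref{eq:problem_z} with $f^\star$, and the independence of edge activations from the iterate history via Assumption~\ref{ass:iid_var} — has already been absorbed into the basic inequality. The only minor subtlety worth noting in passing is that the bound requires $\bz^\star$ to be an optimal solution of the equivalent unconstrained reformulation~\eqref{eq:problem_z}, whose existence follows from Lemma~\ref{lemma:relaxation} together with Lemma~\ref{lemma:primal_decomp_equivalence} and Corollary~\ref{corollary:equivalence_master_prob_z}; hence the right-hand side is a finite constant (depending only on the initialization and the problem data) plus a vanishing term. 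This decomposition is the standard signature of constant-step subgradient methods: a $O(1/t)$ term governed by the distance to optimum and a persistent $O(\alpha)$ residual governed by the subgradient norms through $C$.
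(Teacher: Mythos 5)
Your proof is correct and is exactly the paper's argument: the paper's own proof consists of the single sentence ``It is sufficient to set $\alpha^t = \alpha$ in~\eqref{eq:basic_ineq_convergence_rate},'' and your substitution $\sum_{\tau=0}^t \alpha^\tau = (t+1)\alpha$, $\sum_{\tau=0}^t (\alpha^\tau)^2 = (t+1)\alpha^2$ followed by division by $2\alpha(t+1)$ is precisely the elided arithmetic. No differences to report.
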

\begin{proof}
  It is sufficient to set $\alpha^t = \alpha$
  in~\eqref{eq:basic_ineq_convergence_rate}.
\end{proof}

Note that the previous convergence rate has a term that goes to zero as $t$
goes to infinity, plus a constant (positive) term. In general, without further
assumptions, only convergence within a neighborhood of the optimum can be
proved when a constant step-size is used.

For the case of exact convergence with diminishing step-size, we assume it
has the form $\alpha^t = \frac{K}{t+1}$ with $K > 0$ (which satisfies
Assumption~\ref{ass:stepsize}).
We can obtain a sublinear rate $\OO(1/\log(t))$, as proved next.
\begin{proposition}[Sublinear rate for diminishing step-size]
  Let the same assumptions of Theorem~\ref{thm:convergence} hold.
  Assume $\alpha^t = \frac{K}{t+1}$ for all $t \ge 0$, with $K > 0$.
  Then, it holds
  \begin{align*}
	  f_\best^t - f^\star
	  \le 
	  \frac{ \|\bz^0 - \bz^\star \|_W^2 + C K^2 }
	    { 2 K \log (t+2) }.
  \end{align*}
\label{prop:convergence_rate_diminishing}
\end{proposition}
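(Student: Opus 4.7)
The plan is to start from the basic inequality~\eqref{eq:basic_ineq_convergence_rate} established in the preceding lemma, substitute the specific diminishing step-size $\alpha^\tau = K/(\tau+1)$, and then bound the two resulting sums (one on each side) in opposite directions to isolate $f_\best^t - f^\star$.

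Concretely, plugging $\alpha^\tau = K/(\tau+1)$ into~\eqref{eq:basic_ineq_convergence_rate} yields
\begin{align*}
  2K \Bigg( \sum_{\tau=0}^t \frac{1}{\tau+1} \Bigg) ( f_\best^t - f^\star )
  \le \|\bz^0 - \bz^\star \|_W^2 + C K^2 \sum_{\tau=0}^t \frac{1}{(\tau+1)^2}.
\end{align*}
The left-hand side needs a lower bound on the harmonic-type sum. Using the integral comparison $1/(\tau+1) \ge \int_{\tau+1}^{\tau+2} dx/x$ and summing from $\tau=0$ to $t$ gives $\sum_{\tau=0}^t 1/(\tau+1) \ge \int_1^{t+2} dx/x = \log(t+2)$, which produces the desired $\log(t+2)$ factor. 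For the right-hand side, I would simply observe that $\sum_{\tau=0}^t 1/(\tau+1)^2$ is a partial sum of the convergent series $\sum_{k\ge 1} 1/k^2 = \pi^2/6$, so it is uniformly bounded by a constant; up to absorbing a harmless numerical factor into $C$, we may write $\sum_{\tau=0}^t 1/(\tau+1)^2 \le 1$. Dividing the resulting inequality through by $2K\log(t+2)$ then yields the stated rate.

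There is no deep obstacle here; the proof is essentially bookkeeping on top of the lemma. The only thing worth highlighting is the asymmetric treatment of the two sums: the first diverges (at rate $\log$), giving the convergence factor, while the second converges to a constant, which remains as an accumulated ``noise'' term in the numerator. This explains why an aggressive diminishing rule $\alpha^\tau = K/(\tau+1)$---needed to satisfy the square-summability condition in Assumption~\ref{ass:stepsize} and thus to obtain the almost sure convergence of Theorem~\ref{thm:convergence}---only yields the slow rate $O(1/\log t)$, in contrast to the faster $O(1/\sqrt{t})$ obtainable with non-summable rules of the form $1/\sqrt{\tau+1}$ (which however do not guarantee almost sure convergence).
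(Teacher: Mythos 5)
Your proof is correct and follows essentially the same route as the paper: substitute $\alpha^\tau = K/(\tau+1)$ into the lemma's inequality~\eqref{eq:basic_ineq_convergence_rate}, lower-bound the harmonic sum by $\log(t+2)$ via integral comparison, and upper-bound the sum of inverse squares by a constant. If anything you are more careful than the paper, which asserts $\sum_{\tau=1}^{t} 1/\tau^2 \le 1$ (false for $t \ge 2$); your remark that the true bound is $\pi^2/6$ and that a numerical factor must therefore be absorbed into $C$ is the honest way to reconcile the constant appearing in the stated rate.
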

\begin{proof}
  Let us set $\alpha^t = \frac{K}{t+1}$ in~\eqref{eq:basic_ineq_convergence_rate},
  then it holds
  \begin{align*}
    f_\best^t - f^\star
	  \le 
	  \frac{ \|\bz^0 - \bz^\star \|_W^2 + C K^2 \sum_{\tau=1}^{t+1} \frac{1}{\tau^2} }
	    { 2 K\sum_{\tau=1}^{t+1} \frac{1}{\tau} }.
  \end{align*}
  The proof follows by using the inequalities $\sum_{\tau=1}^{t} \frac{1}{\tau^2} \le 1$
  and $\sum_{\tau=1}^{t} \frac{1}{\tau} \ge \log (t+1)$.
\end{proof}

\begin{remark}
  Convergence rates can be also derived under the assumption of fixed (connected)
  graph by following essentially the same arguments, without block randomization
  in algorithm~\eqref{eq:block_algorithm}. This recovers the approach in
  \cite{notarnicola2017constraint}.
  For constant step-sizes the rate is
  \begin{align*}
    f_\best^t - f^\star
	  \le 
	  \frac{\|\bz^0 - \bz^\star \|^2}{2 \alpha (t+1)}
	    + \frac{C \alpha}{2},
  \end{align*}
  while for diminishing step-sizes the rate is
  \begin{align*}
    f_\best^t - f^\star
	  \le 
	  \frac{ \|\bz^0 - \bz^\star \|^2 + C K^2 }
	    { 2 K \log (t+2) },
  \end{align*}
  where here the quantities $f_\best^t$ and $C$ are defined as
  $f_\best^t \triangleq \min_{\tau \le t} \sum_{i=1}^N f_i(\bx_i^\tau) + M \rho_i^\tau$
  and $C \triangleq \sum_{\ell=1}^B C_\ell^2$.
  \oprocend
\end{remark}

\subsection{Discussion on the Parameter $M$}
\label{sec:discussion_M}

In this subsection, we discuss the choice of the parameter $M$ in the
local minimization problem of the \algacronym/ algorithm (cf.~\eqref{eq:alg_local_prob}).

As per Theorem~\ref{thm:convergence}, it must hold $M > \|\bmu^\star\|_1$,
where $\bmu^\star$ is any dual optimal solution of the original
problem~\eqref{eq:problem_original}. This assumption is needed for
the relaxation approach of Section~\ref{sec:relaxation_primal_decomp} to apply.
In general, a dual optimal solution $\bmu^\star$ of the original
problem~\eqref{eq:problem_original} may not be known in advance. 
However, if a Slater point %
is available (cf.
Assumption~\ref{ass:slater}), it is possible for the agents to compute a
conservative lower bound on $M$.
The next proposition provides a sufficient condition to satisfy $M > \|\bmu^\star\|_1$.
\begin{proposition}
  \label{prop:upper_bound_M}
  Let Assumptions~\ref{ass:problem} and~\ref{ass:slater} hold.
  Moreover, let $(\bar{\bx}_1, \ldots, \bar{\bx}_N)$ be a Slater point, i.e.,
  a feasible point for problem~\eqref{eq:problem_original} with $\sum_{i=1}^N \bg_i(\bar{\bx}_i) < \0$.
  Then, a valid choice of $M$ for Theorem~\ref{thm:convergence} is any
  number satisfying
	\begin{align}
	  M > \frac{1}{\gamma} \sum_{i=1}^N \Big( f_i(\bar{\bx}_i) - \min_{\bx_i \in X_i} f_i(\bx_i) \Big),
	\label{eq:upper_bound_M}
	\end{align}
	where $\gamma = \min_{1 \le s \le S} \{ - \sum_{i=1}^N g_{is} (\bar{\bx}_i) \}$.
\end{proposition}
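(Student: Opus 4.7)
The plan is to prove the standard Lagrangian-based bound on $\|\bmu^\star\|_1$ by exploiting weak duality evaluated at the Slater point, together with the trivial lower bound on $f^\star$ obtained by dropping the coupling constraint.

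First, I would set up the Lagrangian of problem~\eqref{eq:problem_original}, $L(\bx, \bmu) = \sum_{i=1}^N f_i(\bx_i) + \bmu^\top \sum_{i=1}^N \bg_i(\bx_i)$, and recall that Assumptions~\ref{ass:problem} and~\ref{ass:slater} ensure strong duality holds and that an optimal dual multiplier $\bmu^\star \ge \0$ attains the dual optimum $q(\bmu^\star) = f^\star$, where $q(\bmu) = \min_{\bx_i \in X_i, \, i \in \until{N}} L(\bx, \bmu)$.

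Second, I would evaluate the dual function at the Slater point: since $(\bar{\bx}_1, \ldots, \bar{\bx}_N) \in X_1 \times \cdots \times X_N$, it holds $q(\bmu^\star) \le L(\bar{\bx}, \bmu^\star)$, giving
\begin{align*}
f^\star \le \sum_{i=1}^N f_i(\bar{\bx}_i) + (\bmu^\star)^\top \sum_{i=1}^N \bg_i(\bar{\bx}_i).
\end{align*}
Rearranging and using $\bmu^\star \ge \0$ together with the componentwise inequality $\big[-\sum_{i=1}^N \bg_i(\bar{\bx}_i)\big]_s \ge \gamma$ for every $s \in \until{S}$ (which follows from the definition of $\gamma$, and is strictly positive by Slater), I obtain
\begin{align*}
\gamma \|\bmu^\star\|_1 = \gamma \1^\top \bmu^\star \le (\bmu^\star)^\top \Big(-\sum_{i=1}^N \bg_i(\bar{\bx}_i)\Big) \le \sum_{i=1}^N f_i(\bar{\bx}_i) - f^\star.
\end{align*}

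Third, I would lower bound $f^\star$ by the separable unconstrained minimum, namely $f^\star \ge \sum_{i=1}^N \min_{\bx_i \in X_i} f_i(\bx_i)$, which holds because the feasible set of~\eqref{eq:problem_original} is contained in $X_1 \times \cdots \times X_N$. Substituting yields
\begin{align*}
\|\bmu^\star\|_1 \le \frac{1}{\gamma} \sum_{i=1}^N \Big( f_i(\bar{\bx}_i) - \min_{\bx_i \in X_i} f_i(\bx_i) \Big),
\end{align*}
so any $M$ strictly exceeding the right-hand side satisfies $M > \|\bmu^\star\|_1$, fulfilling the hypothesis of Theorem~\ref{thm:convergence}.

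There is essentially no hard step here: the minima $\min_{\bx_i \in X_i} f_i(\bx_i)$ are finite by Assumption~\ref{ass:problem} (continuous $f_i$ over the compact $X_i$), and $\gamma > 0$ follows directly from the strict Slater inequality. The only subtlety worth flagging is that the bound requires $\bmu^\star \ge \0$ to pass from $\1^\top \bmu^\star$ to $\|\bmu^\star\|_1$, which holds since $\bmu^\star$ is a multiplier for inequality constraints.
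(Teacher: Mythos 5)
Your proof is correct and follows essentially the same route as the paper: bound $\|\bmu^\star\|_1$ in terms of the Slater point and the dual optimal value, then lower-bound $f^\star = q(\bmu^\star)$ by the separable minimum $\sum_{i=1}^N \min_{\bx_i \in X_i} f_i(\bx_i)$. The only difference is that the paper invokes a known lemma of Nedi\'{c} and Ozdaglar for the inequality $\|\bmu^\star\|_1 \le \frac{1}{\gamma}\big(\sum_i f_i(\bar{\bx}_i) - q(\bmu^\star)\big)$, whereas you re-derive it inline from weak duality at the Slater point, which is a perfectly valid (and self-contained) substitute.
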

\begin{proof}
	Let us consider the dual problem associated to~\eqref{eq:problem_original} when
	only the constraint $\sum_{i=1}^N \bg_i(\bx_i) \le \0$ is dualized, i.e.,
	\begin{align}
	\begin{split}
	  \max_{\bmu \in \real^S} \: & \: q(\bmu)
	  \\
	  \subj \: & \: \bmu \ge 0,
	\end{split}
	\label{eq:dual_prob}
	\end{align}
	\GN{with $q(\bmu)$ being the dual function, defined as
	\begin{align*}
	  q(\bmu)
	  &= \inf_{\bx_1 \in X_1, \ldots, \bx_N \in X_N} \bigg\{
	  \sum_{i=1}^N \big( f_i(\bx_i) + \bmu^\top \bg_i(\bx_i) \big) \bigg\}
	  \\
	  &= \sum_{i=1}^N \inf_{\bx_i \in X_i} \big( f_i(\bx_i) + \bmu^\top \bg_i(\bx_i) \big),
	  \\
	  &= \sum_{i=1}^N \min_{\bx_i \in X_i} \big( f_i(\bx_i) + \bmu^\top \bg_i(\bx_i) \big),
	\end{align*}
	where the $\inf$ can be split because the summands
	depend on different variables and the operator $\inf$ can be replaced by $\min$ since
	the sets $X_i$ are compact and $f_i, g_i$
	are continuous due to convexity (cf. Assumption~\ref{ass:problem}).}
	Let us denote by $\bmu^\star$ an optimal solution of problem~\eqref{eq:dual_prob}.
	By Assumptions~\ref{ass:problem} and~\ref{ass:slater}, strong duality holds,
	therefore $q(\bmu^\star) = \sum_{i=1}^N f_i(\bx_i^\star)$, where
	$(\bx_1^\star, \ldots, \bx_N^\star)$ is an optimal solution of
	problem~\eqref{eq:problem_original}. Also, note that $\bmu^\star$ is also
	a Lagrange multiplier of problem~\eqref{eq:problem_original} (see, e.g.,
	\cite[Proposition 5.1.4]{bertsekas1999nonlinear}).
	To upper bound $\|\bmu^\star\|_1$, we invoke
	\cite[Lemma 1]{nedic2009approximate},
	\begin{align}
	  \|\bmu^\star\|_1
	  &\le
	  \frac{1}{\gamma} \Bigg( \sum_{i=1}^N f_i(\bar{\bx}_i) - q(\bmu^\star) \Bigg)
	  \nonumber
	  \\
	  &=
	  \frac{1}{\gamma} \sum_{i=1}^N \big( f_i(\bar{\bx}_i) - f_i(\bx_i^\star) \big)
	  \nonumber
	  \\
	  &\le
	  \frac{1}{\gamma} \sum_{i=1}^N \Big( f_i(\bar{\bx}_i) - \min_{\bx_i \in X_i} f_i(\bx_i) \Big),
	  \label{eq:upper_bound_mustar}
	\end{align}
	\GN{where the minimum in the right-hand side of~\eqref{eq:upper_bound_mustar}
	exists by Weierstrass's Theorem,}
	and the proof follows by choosing $M$ as any number strictly greater
	than the right-hand side of~\eqref{eq:upper_bound_mustar}.
\end{proof}

Note that, if each agent knows its portion $\bar{\bx}_i$ of the Slater vector
$(\bar{\bx}_1, \ldots, \bar{\bx}_N)$, the network can run a combination
of $\min$-consensus and average consensus protocols to determine
the right-hand side of~\eqref{eq:upper_bound_M}, because the quantities in
the sum are locally computable. As such, the calculation of $M$ can be
completely distributed.

\section{Numerical Study}
\label{sec:simulations}

In this section, we show the efficacy of \algacronym/ and validate the theoretical
findings through numerical computations. \GN{We first concentrate on a simple
example to show the main algorithm features. Then, we perform an in-depth
numerical study on an electric vehicle charging scenario.
All the simulations are performed with the \textsc{disropt}
Python package~\cite{farina2019disropt} on a desktop PC, with
MPI-based communication.

\subsection{Basic Example}
We begin by considering a network of $N = 5$ agents that must solve the
convex problem
\begin{align}
\begin{split}
  \min_{\bx_1, \ldots, \bx_N} \: & \: \sum_{i=1}^N \|\bx_i - \br_i\|_1
  \\
  \subj \: & \: \sum_{i=1}^N i \cdot \bx_i \le \0
  \\
  & \: -10 \cdot \1 \le \bx_i \le 10 \cdot \1, \hspace{0.5cm} i \in \until{N},
\end{split}
\label{eq:nonsmooth_example}
\end{align}
where each $\bx_i \in \real^{3}$, and $\br_i \in \real^3$ is a random
vector with entries in the interval $[15, 20]$.
Problem~\eqref{eq:nonsmooth_example} is in the
form~\eqref{eq:problem_original} with the positions
$f_i(\bx_i) = \|\bx_i - \br_i\|_1$,
$X_i = \big\{ \bx_i \in \real^3 | -10 \cdot \1 \le \bx_i \le 10 \cdot \1 \big\}$
and $\bg_i(\bx_i) = i \cdot \bx_i$. 
Note that the objective function and the coupling constraint functions
are convex but not smooth.

As for the communication graph, we generate a random connected graph
with random edge activation probabilities. The resulting edge probability
matrix is
\begin{align*}
  \begin{bmatrix}
    0 & 0 & 0 & 0.5 & 0.6
    \\
    0 & 0 & 0.4 & 0 & 0.7
    \\
    0 & 0.4 & 0 & 0 & 0
    \\
    0.5 & 0 & 0 & 0 & 0
    \\
    0.6 & 0.7 & 0 & 0 & 0
  \end{bmatrix}
\end{align*}

In order to apply the \algacronym/ algorithm, we compute a valid value of
the parameter $M$ appearing in problem~\eqref{eq:alg_local_prob} by using
Proposition~\ref{prop:upper_bound_M}
with the Slater vector $(\bar{\bx}_1, \ldots, \bar{\bx}_N)$ with each
$\bar{\bx}_i = -10 \cdot \1$.
After performing all the computations, we obtain the condition $M > 1$
and we finally choose $M = 6$. The \algacronym/ algorithm
is initialized at $\by_i^0 = 0$ for all $i \in \until{N}$ and the
step-size $\alpha^t = 1/(K+1)^{0.6}$ is used (which satisfies
Assumption~\ref{ass:stepsize}).
The simulation results are reported in Figures~\ref{fig:simulation_nonsmooth_cost}
and~\ref{fig:simulation_nonsmooth_coupling}. The asymptotic behavior of
Theorem~\ref{thm:convergence} is confirmed.

\begin{figure}[!htpb]
\centering
  \includegraphics[scale=1]{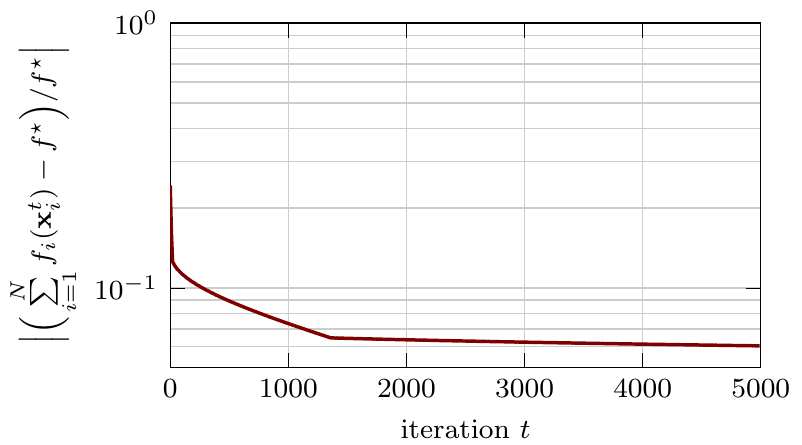}
  \caption{
    \GN{Evolution of the normalized cost error for the basic example.}
  }
\label{fig:simulation_nonsmooth_cost}
\end{figure}

\begin{figure}[!htpb]
\centering
  \includegraphics[scale=1]{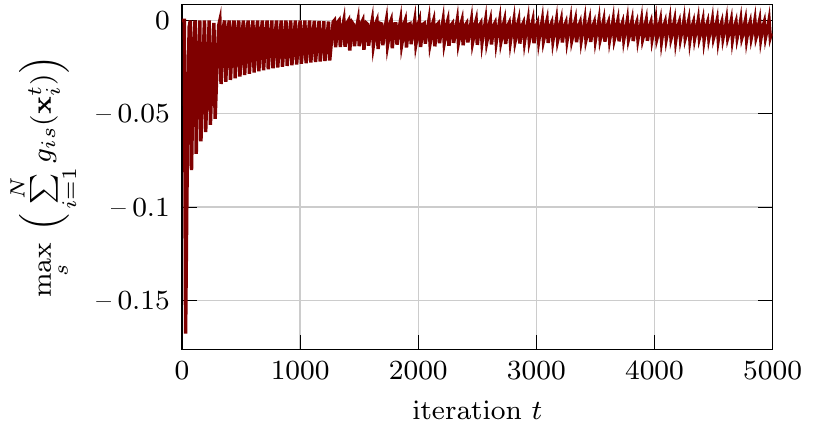}
  \caption{
    \GN{Evolution of the coupling constraint for the basic example.
    A value below zero means that the solution
    computed by the algorithm at that iteration is feasible.}}
\label{fig:simulation_nonsmooth_coupling}
\end{figure}

}

\subsection{Electric Vehicle Charging Problem}
\GN{Let us now} consider the charging of Plug-in Electric Vehicles (PEVs),
which is formulated in detail in~\cite{vujanic2016decomposition} and is slightly
changed here in order to better highlight the algorithm behavior.
\GN{The simulations reported in the remainder of this section are all referred
to this application scenario.}

The problem consists of determining an optimal charging schedule
of $N$ electric vehicles. Each vehicle $i$ has an initial state of charge
$E_i^\text{init}$ and a target state of charge $E_i^\text{ref}$ that must be
reached within a time horizon of $8$ hours, divided into $T = 12$ time slots
of $\Delta T = 40$ minutes. Vehicles must further satisfy a coupling constraint,
which is given by the fact that the total power drawn from the (shared)
electricity grid must not exceed $P^\text{max} = N/2$.
In this paper, we consider the ``charge-only'' case. In order to make
sure the local constraint set are convex (cf. Assumption~\ref{ass:problem}),
we drop the additional integer constraints considered
in~\cite{vujanic2016decomposition}. Thus, the vehicles optimize their charging
rate rather than activating or de-activating the charging mode at each time slot.
Formally, the resulting linear program is
\begin{align*}
\begin{split}
  \min_{\bx_1, \ldots, \bx_N} \: & \: \sum_{i=1}^N c_i^\top \bx_i
  \\
  \subj \: & \: \sum_{i=1}^N A_i \bx_i \le b,
  \\
  & \: \bx_i \in X_i, \hspace{1cm} i \in \until{N},
\end{split}
\end{align*}
where the local constraint sets $X_i$ are compact polyhedra and a total
of $S = 12$ coupling constraints are present. For a complete reference
on the other quantities involved in the problem and not explicitly specified
here, we refer the reader to the extended formulation
in~\cite{vujanic2016decomposition}.

We consider a network of $N = 50$ agents where the underlying graph $\EE_u$
is generated as an Erd\H{o}s-R\'{e}nyi graph with edge probability $0.2$.
The edge activation probabilities $\prob_{ij}$ are randomly picked
in $[0.3, 0.9]$.
In particular, in the next subsections we \emph{(i)} compare our algorithm with
the state of the art, \emph{(ii)} discuss the parameter $M$ and \emph{(iii)}
show the convergence rate.

\subsection{Comparison with State of the Art}
\label{sec:simulation_comparison_dual_subg}

We compare \algacronym/ with the existing Distributed Dual Subgradient
algorithm~\cite{falsone2017dual}.
As for the algorithm tuning \GN{(i.e., the step-size $\alpha^t$
in the update~\eqref{eq:alg_update} and the parameter $M$ appearing
in problem~\eqref{eq:alg_local_prob})}, we choose $M = 30$ and the
diminishing step-size $\alpha^t = \frac{1}{(t+1)^{0.6}}$.
Our algorithm is initialized in $\by_i^0 = \0$ for all $i$ and the Distributed
Dual Subgradient algorithm is initialized in $\blambda_i^0 = \0$ for all $i$.
In Figure~\ref{fig:simulation_cost}, the cost error of both algorithms is shown,
compared with the result of a centralized problem solver. For
our algorithm, the symbol $\bx_i^t$ represents the local solution
of problem~\eqref{eq:alg_local_prob} at time $t$, while for the
Distributed Dual Subgradient, the same symbol represents the (unweighted)
running average of the local solutions over the past iterations.
The figure highlights that, in this simulation, \algacronym/ reached
almost exact cost convergence shortly after $10,000$ iterations with a sudden
change of approximately $10$ orders of magnitude. In principle, for the
Distributed dual subgradient, it is not possible to have such rapid changes
because of the use of running averages.
\begin{figure}[!htpb]
\centering
  \includegraphics[scale=1]{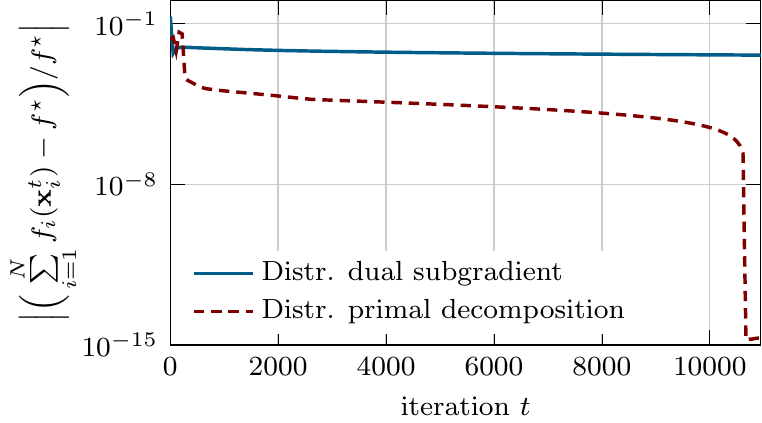}
  \caption{
    Evolution of the normalized cost error for the comparative study
    with the state of the art.
  }
\label{fig:simulation_cost}
\end{figure}

In Figure~\ref{fig:simulation_coupling}, we show the value of the coupling
constraints. The picture highlights that both algorithms are able to provide
feasible solutions within less than 500 iterations, confirming the primal recovery
property.
\begin{figure}[!htpb]
\centering
  \includegraphics[scale=1]{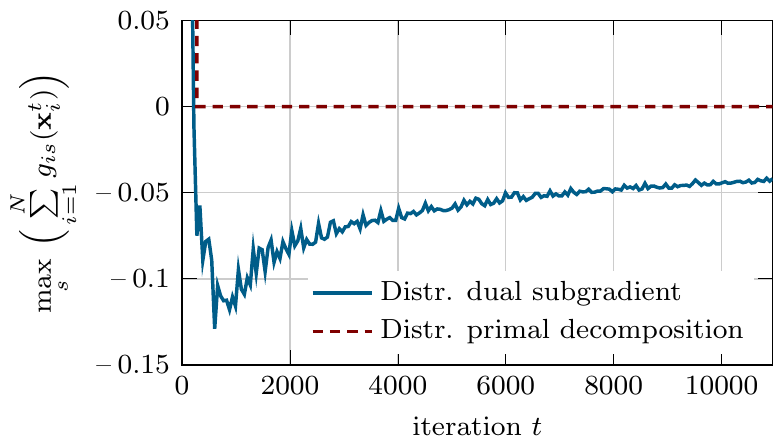}
  \caption{
    Evolution of the coupling constraint for the comparative study
    with the state of the art. A value below zero means that the solution
    computed by the algorithm at that iteration is feasible.}
\label{fig:simulation_coupling}
\end{figure}

\subsection{Impact of the Parameter $M$}
We also perform a numerical comparison of the algorithm behavior for
different values of the parameter $M$ (see also
Section~\ref{sec:discussion_M}).
Under the same set-up of the previous simulation, we \GN{use a
different initialization to guarantee the requirements
imposed by Theorem~\ref{thm:convergence} and also to create some
asymmetry among the initial allocations of the agents. Thus, in this simulation
we consider the initialization rule $\by_i^0 = 5(N - 2i) \1$}
for all $i$, which satisfies $\sum_{i=1}^N \by_i^0 = \0$.

In Figure~\ref{fig:simulation_M_cost} we plot the cost error, including the
extra penalty term $\sum_{i=1}^N M \rho_i^t$, for three different values
of $M$ (all of which satisfy the assumption $M > \|\bmu^\star\|_1$).
It can be seen that the slope of the curve decreases as $M$ increases,
which agrees with the fact that the larger is $M$, the larger is the set in
which subgradients can be found
(Lemma~\ref{lemma:bounded_subgradients}).

\begin{figure}[!htpb]
\centering
  \includegraphics[scale=1]{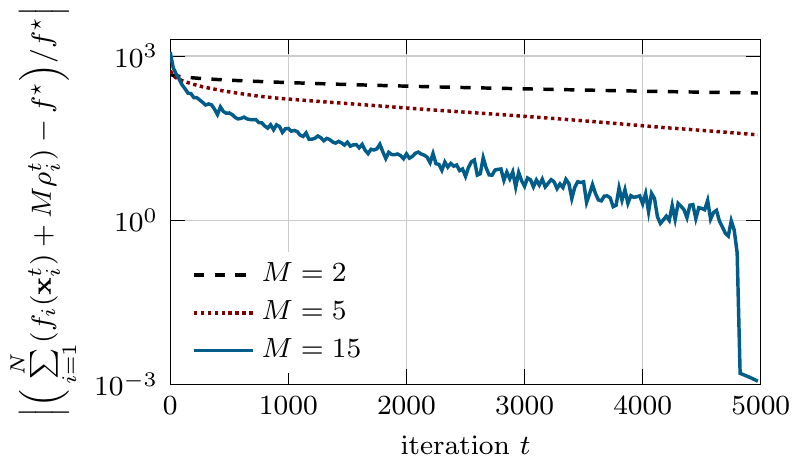}
  \caption{
    Evolution of the normalized cost error for different values of $M$,
    under diminishing step-size.
  }
\label{fig:simulation_M_cost}
\end{figure}

Figure~\ref{fig:simulation_M_rho} shows the maximum value of $\rho_i^t$
among agents. Recall that $\rho_i^t$ is an upper bound on the violation
of the local allocation $\by_i^t$. The picture underlines that such a quantity
is forced to zero faster as $M$ gets bigger. This can be intuitively
explained by the fact that larger values of the penalty $M \rho_i$ drive the
algorithm more quickly towards feasibility of the coupling constraint.

\begin{figure}[!htpb]
\centering
  \includegraphics[scale=1]{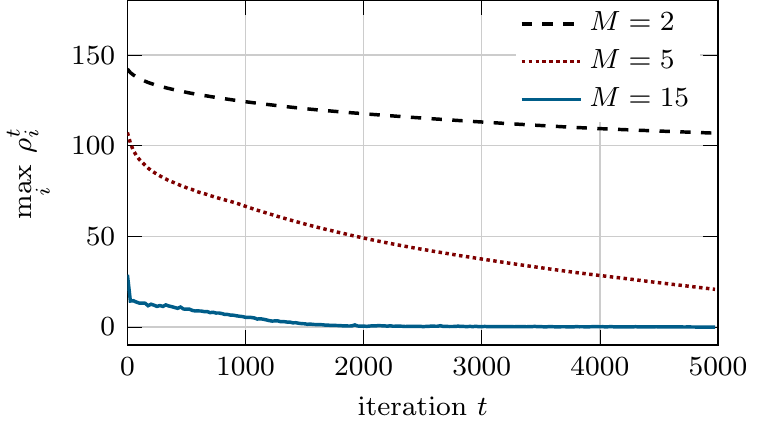}
  \caption{ Evolution of the value of $\max_i \rho_i^t$ for varying values of
    $M$. The quantity represents an upper bound on the coupling constraint
    violation.
  }
\label{fig:simulation_M_rho}
\end{figure}

\subsection{Numerical Study on Convergence Rates}
We finally perform a simulation to point out the different behavior of the
algorithm with constant and diminishing step-sizes.
Under the same set-up of the previous example, with $M = 10$,
we run the algorithm with the diminishing step-size law
$\alpha^t = \frac{0.5}{(t+1)^{0.6}}$ and with the constant
step-size $\alpha^t = 0.01$. As before, agents initialize their local
allocation at $\by_i^0 = 5(N - 2i) \1$ for all $i$.

Figure~\ref{fig:simulation_rate_cost} shows the different algorithm behavior
under the two step-size choices. For constant step-size, the algorithm
converges within a certain tolerance (which is seen in the picture
at around iteration $6,000$), confirming the observations in
Section~\ref{sec:rates_and_discussion}. Moreover, the sublinear behavior
with the diminishing step-size is confirmed.
Interestingly, in this example the constant step-size behaved linearly up to
iteration $4,000$ and superlinearly in the interval $4,000$--$6,000$,
therefore performing much better than the sublinear bound in
Proposition~\ref{prop:convergence_rate_constant}.
\begin{figure}[!htpb]
\centering
  \includegraphics[scale=1]{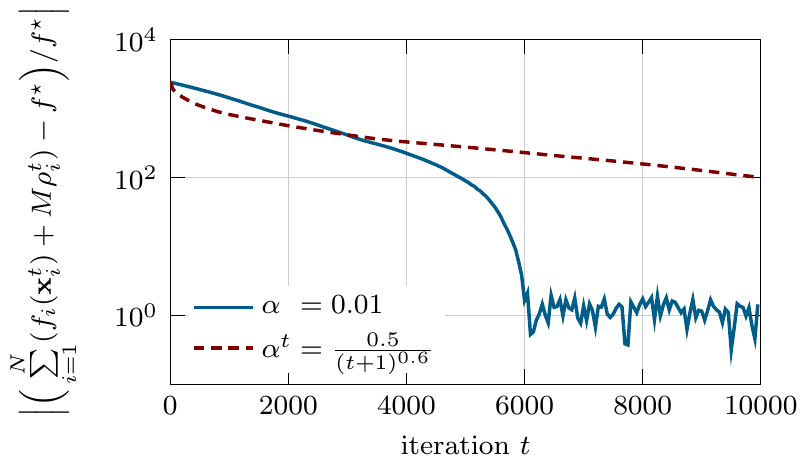}
  \caption{
    Evolution of the cost error for the comparative study on step-sizes.
  }
\label{fig:simulation_rate_cost}
\end{figure}

\section{Conclusions}

In this paper, we presented the \algacronym/ algorithm to solve
constraint-coupled, large-scale, convex optimization problems over random time-varying networks.
The proposed algorithm is based on a relaxation and primal decomposition approach,
and, for the sake of analysis, it is viewed as an instance of a randomized block
subgradient method, in which blocks correspond to edges in the communication graph.
Almost sure convergence to the optimal cost of the original problem
and an almost sure asymptotic primal recovery property are proved.
Sublinear convergence rates are provided under different
step-size assumptions.
Numerical computations on an electric vehicle charging problem
substantiated the theoretical results.

\begin{small}
  \bibliographystyle{IEEEtran}
  \bibliography{primal_decomp_biblio}

\begin{thebibliography}{10}
\providecommand{\url}[1]{#1}
\csname url@samestyle\endcsname
\providecommand{\newblock}{\relax}
\providecommand{\bibinfo}[2]{#2}
\providecommand{\BIBentrySTDinterwordspacing}{\spaceskip=0pt\relax}
\providecommand{\BIBentryALTinterwordstretchfactor}{4}
\providecommand{\BIBentryALTinterwordspacing}{\spaceskip=\fontdimen2\font plus
\BIBentryALTinterwordstretchfactor\fontdimen3\font minus
  \fontdimen4\font\relax}
\providecommand{\BIBforeignlanguage}[2]{{%
\expandafter\ifx\csname l@#1\endcsname\relax
\typeout{** WARNING: IEEEtran.bst: No hyphenation pattern has been}%
\typeout{** loaded for the language `#1'. Using the pattern for}%
\typeout{** the default language instead.}%
\else
\language=\csname l@#1\endcsname
\fi
#2}}
\providecommand{\BIBdecl}{\relax}
\BIBdecl

\bibitem{camisa2019primal}
A.~Camisa, F.~Farina, I.~Notarnicola, and G.~Notarstefano, ``Distributed
  constraint-coupled optimization over random time-varying graphs via primal
  decomposition and block subgradient approaches,'' in \emph{IEEE Conference on
  Decision and Control}, 2019, pp. 6374--6379.

\bibitem{nedic2009distributed}
A.~Nedi{\'c} and A.~Ozdaglar, ``Distributed subgradient methods for multi-agent
  optimization,'' \emph{IEEE Transactions on Automatic Control}, vol.~54,
  no.~1, pp. 48--61, 2009.

\bibitem{duchi2012dual}
J.~C. Duchi, A.~Agarwal, and M.~J. Wainwright, ``Dual averaging for distributed
  optimization: Convergence analysis and network scaling,'' \emph{IEEE
  Transactions on Automatic Control}, vol.~57, no.~3, pp. 592--606, 2012.

\bibitem{zhu2012distributed}
M.~Zhu and S.~Mart{\'i}nez, ``On distributed convex optimization under
  inequality and equality constraints,'' \emph{IEEE Transactions on Automatic
  Control}, vol.~57, no.~1, pp. 151--164, 2012.

\bibitem{mota2013dadmm}
J.~F. Mota, J.~M. Xavier, P.~M. Aguiar, and M.~P{\"u}schel, ``{D}-{ADMM}: A
  communication-efficient distributed algorithm for separable optimization,''
  \emph{IEEE Transactions on Signal Processing}, vol.~61, no.~10, pp.
  2718--2723, 2013.

\bibitem{shi2014linear}
W.~Shi, Q.~Ling, K.~Yuan, G.~Wu, and W.~Yin, ``On the linear convergence of the
  {ADMM} in decentralized consensus optimization,'' \emph{{IEEE} Transactions
  on Signal Processing}, vol.~62, no.~7, pp. 1750--1761, 2014.

\bibitem{jakovetic2014fast}
D.~Jakoveti{\'c}, J.~Xavier, and J.~M. Moura, ``Fast distributed gradient
  methods,'' \emph{IEEE Transactions on Automatic Control}, vol.~59, no.~5, pp.
  1131--1146, 2014.

\bibitem{shi2015extra}
W.~Shi, Q.~Ling, G.~Wu, and W.~Yin, ``{EXTRA}: An exact first-order algorithm
  for decentralized consensus optimization,'' \emph{SIAM Journal on
  Optimization}, vol.~25, no.~2, pp. 944--966, 2015.

\bibitem{simonetto2016primal}
A.~Simonetto and H.~Jamali-Rad, ``Primal recovery from consensus-based dual
  decomposition for distributed convex optimization,'' \emph{Journal of
  Optimization Theory and Applications}, vol. 168, no.~1, pp. 172--197, 2016.

\bibitem{falsone2017dual}
A.~Falsone, K.~Margellos, S.~Garatti, and M.~Prandini, ``Dual decomposition for
  multi-agent distributed optimization with coupling constraints,''
  \emph{Automatica}, vol.~84, pp. 149--158, 2017.

\bibitem{notarnicola2017constraint}
I.~Notarnicola and G.~Notarstefano, ``Constraint-coupled distributed
  optimization: a relaxation and duality approach,'' \emph{{IEEE} Transactions
  on Control of Network Systems}, vol.~PP, no.~99, pp. 1--10, 2019.

\bibitem{chang2014distributed}
T.-H. Chang, A.~Nedi{\'c}, and A.~Scaglione, ``Distributed constrained
  optimization by consensus-based primal-dual perturbation method,'' \emph{IEEE
  Transactions on Automatic Control}, vol.~59, no.~6, pp. 1524--1538, 2014.

\bibitem{mateos2017distributed}
D.~Mateos-N{\'u}nez and J.~Cort{\'e}s, ``Distributed saddle-point subgradient
  algorithms with laplacian averaging,'' \emph{IEEE Transactions on Automatic
  Control}, vol.~62, no.~6, pp. 2720--2735, 2017.

\bibitem{burger2014polyhedral}
M.~B{\"u}rger, G.~Notarstefano, and F.~Allg{\"o}wer, ``A polyhedral
  approximation framework for convex and robust distributed optimization,''
  \emph{IEEE Transactions on Automatic Control}, vol.~59, no.~2, pp. 384--395,
  2014.

\bibitem{liang2019distributed}
S.~Liang, L.~Y. Wang, and G.~Yin, ``Distributed smooth convex optimization with
  coupled constraints,'' \emph{IEEE Transactions on Automatic Control},
  vol.~65, no.~1, pp. 347--353, 2020.

\bibitem{necoara2015linear}
I.~Necoara and V.~Nedelcu, ``On linear convergence of a distributed dual
  gradient algorithm for linearly constrained separable convex problems,''
  \emph{Automatica}, vol.~55, pp. 209--216, 2015.

\bibitem{alghunaim2018dual}
S.~Alghunaim, K.~Yuan, and A.~Sayed, ``Dual coupled diffusion for distributed
  optimization with affine constraints,'' in \emph{IEEE Conference on Decision
  and Control}, 2018, pp. 829--834.

\bibitem{sherson2019distributed}
T.~W. Sherson, R.~Heusdens, and W.~B. Kleijn, ``On the distributed method of
  multipliers for separable convex optimization problems,'' \emph{IEEE
  Transactions on Signal and Information Processing over Networks}, vol.~5,
  no.~3, pp. 495--510, 2019.

\bibitem{chang2014multi}
T.-H. Chang, M.~Hong, and X.~Wang, ``Multi-agent distributed optimization via
  inexact consensus {ADMM},'' \emph{IEEE Transactions on Signal Processing},
  vol.~63, no.~2, pp. 482--497, 2014.

\bibitem{wang2017distributed}
Z.~Wang and C.~J. Ong, ``Distributed model predictive control of linear
  discrete-time systems with local and global constraints,'' \emph{Automatica},
  vol.~81, pp. 184--195, 2017.

\bibitem{carli2019distributed}
R.~Carli and M.~Dotoli, ``Distributed alternating direction method of
  multipliers for linearly constrained optimization over a network,''
  \emph{IEEE Control Systems Letters}, vol.~4, no.~1, pp. 247--252, 2020.

\bibitem{zhang2018consensus}
Y.~Zhang and M.~M. Zavlanos, ``A consensus-based distributed augmented
  lagrangian method,'' in \emph{IEEE Conference on Decision and Control}, 2018,
  pp. 1763--1768.

\bibitem{falsone2019tracking}
A.~Falsone, I.~Notarnicola, G.~Notarstefano, and M.~Prandini, ``Tracking-{ADMM}
  for distributed constraint-coupled optimization,'' \emph{Automatica}, vol.
  117, p. 108962, 2020.

\bibitem{beck2013convergence}
A.~Beck and L.~Tetruashvili, ``On the convergence of block coordinate descent
  type methods,'' \emph{SIAM Journal on Optimization}, vol.~23, no.~4, pp.
  2037--2060, 2013.

\bibitem{razaviyayn2013unified}
M.~Razaviyayn, M.~Hong, and Z.-Q. Luo, ``A unified convergence analysis of
  block successive minimization methods for nonsmooth optimization,''
  \emph{SIAM Journal on Optimization}, vol.~23, no.~2, pp. 1126--1153, 2013.

\bibitem{richtarik2014iteration}
P.~Richt{\'a}rik and M.~Tak{\'a}{\v{c}}, ``Iteration complexity of randomized
  block-coordinate descent methods for minimizing a composite function,''
  \emph{Mathematical Programming}, vol. 144, no. 1-2, pp. 1--38, 2014.

\bibitem{dang2015stochastic}
C.~D. Dang and G.~Lan, ``Stochastic block mirror descent methods for nonsmooth
  and stochastic optimization,'' \emph{SIAM Journal on Optimization}, vol.~25,
  no.~2, pp. 856--881, 2015.

\bibitem{necoara2013random}
I.~Necoara, ``Random coordinate descent algorithms for multi-agent convex
  optimization over networks,'' \emph{IEEE Transactions on Automatic Control},
  vol.~58, no.~8, pp. 2001--2012, 2013.

\bibitem{necoara2014random}
I.~Necoara, Y.~Nesterov, and F.~Glineur, ``A random coordinate descent method
  on large-scale optimization problems with linear constraints,'' Tech. Rep.,
  2014.

\bibitem{bertsekas1999nonlinear}
D.~P. Bertsekas, \emph{Nonlinear programming}.\hskip 1em plus 0.5em minus
  0.4em\relax Athena Scientific, 1999.

\bibitem{silverman1972primal}
G.~J. Silverman, ``Primal decomposition of mathematical programs by resource
  allocation: {I} -- basic theory and a direction-finding procedure,''
  \emph{Operations Research}, vol.~20, no.~1, pp. 58--74, 1972.

\bibitem{camisa2018primal}
A.~Camisa, I.~Notarnicola, and G.~Notarstefano, ``A primal decomposition method
  with suboptimality bounds for distributed mixed-integer linear programming,''
  in \emph{IEEE Conference on Decision and Control}, 2018, pp. 3391--3396.

\bibitem{bertsekas2003convex}
D.~P. Bertsekas, A.~Nedi{\'c}, A.~E. Ozdaglar \emph{et~al.}, \emph{Convex
  analysis and optimization}.\hskip 1em plus 0.5em minus 0.4em\relax Athena
  Scientific, 2003.

\bibitem{nedic2009approximate}
A.~Nedi{\'c} and A.~Ozdaglar, ``Approximate primal solutions and rate analysis
  for dual subgradient methods,'' \emph{SIAM Journal on Optimization}, vol.~19,
  no.~4, pp. 1757--1780, 2009.

\bibitem{farina2019disropt}
F.~Farina, A.~Camisa, A.~Testa, I.~Notarnicola, and G.~Notarstefano,
  ``{DISROPT}: a {P}ython framework for distributed optimization,'' \emph{arXiv
  preprint arXiv:1911.02410}, 2019.

\bibitem{vujanic2016decomposition}
R.~Vujanic, P.~M. Esfahani, P.~J. Goulart, S.~Mari{\'e}thoz, and M.~Morari, ``A
  decomposition method for large scale {MILP}s, with performance guarantees and
  a power system application,'' \emph{Automatica}, vol.~67, pp. 144--156, 2016.

\end{thebibliography}
\end{small}

\end{document}